\documentclass[12pt]{article}

\usepackage{amsmath,amssymb,amsfonts}
\usepackage{amsthm}
\usepackage{mathrsfs}
\usepackage{mathtools}
\usepackage{bm}
\usepackage{enumitem}
\usepackage{xcolor}
\usepackage{microtype,hyperref}
\usepackage[margin=1in]{geometry}

\usepackage[numbers,sort&compress]{natbib}

\theoremstyle{thmstyleone}
\newtheorem{theorem}{Theorem}[section]
\newtheorem{proposition}[theorem]{Proposition}
\newtheorem{lemma}[theorem]{Lemma}
\newtheorem{corollary}[theorem]{Corollary}

\theoremstyle{thmstyletwo}
\newtheorem{remark}[theorem]{Remark}
\newtheorem{example}[theorem]{Example}

\numberwithin{equation}{section}

\theoremstyle{thmstylethree}
\newtheorem{definition}[theorem]{Definition}

\newcommand{\R}{\mathbb R}
\newcommand{\Acal}{\mathcal A}
\newcommand{\Dcal}{\mathfrak D}

\begin{document}

\begin{center} 
\MakeUppercase{\bf \Large A fractional de Rham complex for} \\[.2cm]
\MakeUppercase{\bf \Large coframe-attached Maxwell equations}
\end{center}

\begin{center}
{\sc Marvin Fritz} \\[.2cm]
\small Faculty of Mathematics, University of Vienna, Vienna, Austria
\end{center}

\begin{quote}\small
\textsc{Abstract.} We develop a coordinate-anchored pointwise theory of fractional tangent functionals on bounded rectangles. By working on the range of the coordinatewise Riemann--Liouville integral, we define anchored fractional operators by exact inversion and prove a representation theorem: every coordinate fractional tangent functional is a scalar multiple of evaluation of the inverse operator at the base point, and the normalized one is unique. At interior points, the resulting fractional tangent space acts faithfully on the common anchored space.
We then construct an exterior algebra over the polynomial algebra generated by the fractional coordinate primitives. Its fractional exterior differential defines a nilpotent algebraic chain complex, admits an explicit polynomial Poincar\'e homotopy, and is related to the ordinary polynomial de~Rham complex by a positive diagonal rescaling. Since this fractional differential is not a graded derivation for the ordinary wedge product, the global object is a de~Rham-type chain complex. We also prove a rigidity theorem for positive-cone-preserving linear coordinate changes preserving the fractional coframe and formulate a Lorentzian coframe-attached Maxwell-type system, deriving charge conservation and fractional wave equations within the polynomial coefficient class.
\end{quote}

\section{Introduction}

Classical differential geometry builds its tangent--cotangent formalism from
local derivations: evaluation of a coordinate partial derivative at a point is
the prototypical tangent vector, and differential forms are dual to these
operators. In fractional analysis the corresponding question is less direct.
Caputo and Riemann--Liouville operators are nonlocal and are central in models
with memory effects, including anomalous diffusion, viscoelasticity, control and
mathematical physics, see the seminal works
\cite{Caputo1967,MetzlerKlafter2000,BagleyTorvik1983,Engheta1997} and the monographs \cite{OldhamSpanier1974,Podlubny1999,Diethelm2010,SamkoKilbasMarichev1993,KilbasSrivastavaTrujillo2006,Mainardi2010}.
Caputo-in-time PDEs also arise in applied and continuum models such as
subdiffusive tumor growth, fractional Fokker--Planck dynamics, polymeric fluids,
and time-fractional phase-field systems
\cite{fritz2021subdiffusive,fritz2024analysis,fritz2024well,fritz2026time}.
However, they do not satisfy the classical Leibniz rule in the ordinary form, so
one cannot obtain a satisfactory fractional tangent space by simply replacing
\(\partial_i\) with a Caputo derivative in the usual definition of derivation.
The aim of this paper is to construct a controlled pointwise substitute and to
identify exactly what global exterior calculus it supports.

{Fractional exterior calculi have been studied before. Cottrill-Shepherd and
Naber introduced fractional exterior derivatives, form spaces, closedness and
exactness, and metric structures
\cite{CottrillShepherdNaber2001,CottrillShepherdNaber2003}; Tarasov used
fractional differential forms and exterior derivatives in the study of
fractional gradient and Hamiltonian systems and developed related vector- and
exterior-calculus formulations
\cite{Tarasov2005GradientHamiltonian,Tarasov2008,Tarasov2010Exterior}.
The distinction is that the present
construction starts from local anchored function spaces and from exact inversion
of the coordinatewise Riemann--Liouville integral. Thus the pointwise fractional
tangent functionals are represented before any formal exterior algebra is
introduced. The global exterior complex is then built only on a polynomial
coefficient class where closure, mixed commutation, nilpotency, and the
Poincar\'e homotopy can be proved directly. Consequently, the Maxwell-type
system in Section~7 is a coframe-attached algebraic model rather than an
intrinsic coordinate-free fractional electromagnetic theory.}

The present work lies at the intersection of several strands: the choice of
domains for fractional differential and integral operators, Caputo-type
fractional PDE models in continuum physics, nonclassical variational calculi, and
homotopy-based methods for exterior differential systems
\cite{KleinerHilfer2021,KhaiderAzanzalAbderrahmaneSaid2024,GolmankhanehTuncDepollierZayed2025,KyciaSilhan2025}.
In comparison with these works, our focus is not on a specific evolution
equation or variational principle, but on the anchored local linear structure
behind a Caputo-type tangent--cotangent formalism and on the formal de~Rham
complex it induces.

The basic normalization is
\begin{equation}
D_i(I_i^\alpha h)=h(p),
\end{equation}
where \(I_i^\alpha\) is the left Riemann--Liouville integral in the \(i\)-th
coordinate and \(p\in U\) is fixed. Although this is the correct normalization
for evaluation of the anchored Caputo derivative, the normalized class is not
closed under addition. We therefore first define the vector space of all scalar
multiples of such functionals and select the normalized generator only afterward.
The resulting representation theorem states that
\begin{equation}
\Dcal_i^\alpha(p)=
\R\cdot \bigl((\partial_{C,i}^\alpha)|_p\bigr),
\end{equation}
and, at interior base points, the direct sum
\(T_p^\alpha U=\bigoplus_i\Dcal_i^\alpha(p)\) acts faithfully on the common
anchored space \(\Acal^\alpha(U)=\bigcap_i\Acal_i^\alpha(U)\).
 The pointwise dual basis
suggests symbols \(d^\alpha X_i\), and we build an exterior algebra over the
polynomial ring generated by the fractional coordinate primitives
\(X_i^\alpha=x_i^\alpha/\Gamma(1+\alpha)\). On this coefficient class the
fractional exterior differential is stable and nilpotent. We also prove an
explicit fractional polynomial Poincar\'e homotopy and a rigidity theorem showing
that, among positive-cone-preserving linear coordinate changes for which the
fractional powers are real-valued on the anchored box, preservation of the
fractional coordinate coframe forces coordinate permutations and positive
rescalings. At the same time, an explicit
positive diagonal rescaling identifies the formal complex with the ordinary
polynomial de~Rham complex. Thus the global complex is a chain-level realization
of the anchored local theory.

The mathematical-physics motivation is provided by Maxwell's equations in
differential-form language \cite{Flanders1989}. Fractional electromagnetic
models have appeared in work on multipole expansions, fractional curl operators,
and fractional Maxwell systems
\cite{Engheta1996,Engheta1997,Engheta1998,Tarasov2008,BaleanuMaxwell}. In the
present setting we formulate a Lorentzian coframe-attached Maxwell-type system
on a bounded spacetime box. Nilpotency gives fractional charge conservation, and
the component equations imply fractional wave equations for the electric and
magnetic fields. The Hodge-type operator used there is attached to the chosen
fractional coframe.

The paper is organized as follows. Section~2 introduces the coordinatewise
anchored spaces and the inverse Caputo operators. Sections~3 and~4 prove the
representation theorem, define the fractional tangent space, and establish
faithfulness at interior base points. Sections~5 and~6 construct the formal
fractional exterior complex, prove nilpotency, the Poincar\'e homotopy and the
coordinate-rigidity result. Section~7 develops the Lorentzian coframe-attached
Maxwell-type system and derives the continuity and wave equations.

\section{Preliminaries}

The purpose of this section is twofold. First, we fix the coordinatewise fractional operators that will be used throughout the paper. Second, and more importantly, we introduce the anchored function spaces on which the local fractional derivation theory becomes exact rather than merely formal. The key point is that on the anchored spaces each function is, in every coordinate direction separately, an \(\alpha\)-fractional primitive of a continuous function. This gives a coordinatewise reconstruction formula and allows us to define the relevant Caputo operator through inversion rather than through an a priori singular integral formula.

Throughout the paper we fix a real number \(\alpha\in(0,1)\), a rectangle
\begin{equation}
U:=\prod_{i=1}^d [0,b_i]\subset \R^d,
\qquad b_i>0,
\end{equation}
and a base point \(p=(p_1,\dots,p_d)\in U\).

\subsection{Coordinatewise fractional operators}

{Throughout this subsection the fractional derivative order is the fixed number
\(\alpha\in(0,1)\). We allow the auxiliary Riemann--Liouville integral parameter
\(\beta>0\) in \eqref{eq:coordinate-RL-integral}, because the semigroup property
and the Caputo representation use both \(\beta=\alpha\) and
\(\beta=1-\alpha\). Whenever the fractional integral is used as the primitive
associated with the order-\(\alpha\) derivative, the relevant choice is
\(\beta=\alpha\).}

{For \(\beta>0\) and a continuous function \(f:U\to\R\), define the left
Riemann--Liouville integral in the \(i\)-th variable by
\begin{equation}\label{eq:coordinate-RL-integral}
(I_i^\beta f)(x)
:= \frac{1}{\Gamma(\beta)}\int_0^{x_i}(x_i-\xi)^{\beta-1}
 f(x_1,\dots,x_{i-1},\xi,x_{i+1},\dots,x_d)\,d\xi.
\end{equation}
For a function \(a:U\to\R\) that is continuously differentiable in the variable
\(x_i\), define the classical partial Caputo derivative of order
\(\alpha\in(0,1)\) by
\begin{equation}\label{eq:coordinate-Caputo-derivative}
(\partial_i^\alpha a)(x)
:= \frac{1}{\Gamma(1-\alpha)}
\int_0^{x_i}(x_i-\xi)^{-\alpha}
\partial_i a(x_1,\dots,x_{i-1},\xi,x_{i+1},\dots,x_d)\,d\xi.
\end{equation}}

We will use the standard semigroup property
\begin{equation}
I_i^\mu I_i^\nu = I_i^{\mu+\nu},
\qquad \mu,\nu>0,
\end{equation}
which holds pointwise for continuous functions; see, for example, \cite[Chapter~2]{Podlubny1999}, \cite[Chapter~2]{SamkoKilbasMarichev1993}, and \cite[Section~2.6]{KilbasSrivastavaTrujillo2006}.

\begin{remark}\label{rem:no-wrong-identity}
The two operators
\begin{equation}
\partial_i^\alpha = I_i^{1-\alpha}\partial_i,
\qquad
{}^{RL}\partial_i^\alpha = \partial_i I_i^{1-\alpha},
\end{equation}
both reduce to \(\partial_i\) as \(\alpha\to 1^-\) but differ on functions that do not vanish at \(\{x_i=0\}\); the identity \(\partial_i^\alpha=\partial_i I_i^{1-\alpha}\) is in particular \emph{not} valid in general. See, for example, \cite[Chapter~2]{OldhamSpanier1974}, \cite[Chapter~2]{Podlubny1999}, and \cite[Sections~2.4--2.5]{KilbasSrivastavaTrujillo2006}. In the present paper, we avoid this ambiguity by defining the anchored derivative below through the exact inverse relation on \(\Acal_i^\alpha(U)\).
\end{remark}

\subsection{Coordinatewise anchored spaces}

The right function spaces for our purposes are not all smooth functions, but the coordinatewise ranges of the fractional integral operators.

\begin{definition}[coordinatewise anchored spaces]\label{def:Ai}
For each \(i\in\{1,\dots,d\}\), define
\begin{equation}
\Acal_i^\alpha(U)
:=
\Big\{
f\in C(U): \exists h_i\in C(U)\text{ such that } f=I_i^\alpha h_i
\Big\}.
\end{equation}
We also define the common anchored space
\begin{equation}
\Acal^\alpha(U):=\bigcap_{i=1}^d \Acal_i^\alpha(U).
\end{equation}
\end{definition}

Thus \(f\in \Acal_i^\alpha(U)\) vanishes on the coordinate hyperplane \(\{x_i=0\}\), while \(f\in\Acal^\alpha(U)\) vanishes on every coordinate hyperplane. The space \(\Acal^\alpha(U)\) is the natural common domain on which all coordinatewise anchored Caputo derivatives are simultaneously available.

\begin{proposition}\label{prop:Ai-vector-space}
For each \(i\), the space \(\Acal_i^\alpha(U)\) is a real vector space. Consequently \(\Acal^\alpha(U)\) is a real vector space as well.
\end{proposition}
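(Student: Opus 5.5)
The plan is to exhibit \(\Acal_i^\alpha(U)\) as the image of a linear map on a vector space, so that it is automatically a subspace of \(C(U)\). First I will check that \(I_i^\alpha\) maps \(C(U)\) into \(C(U)\). Linearity is then immediate from linearity of the integral in \eqref{eq:coordinate-RL-integral}: for \(h,g\in C(U)\) and \(\lambda,\mu\in\R\) we have \(I_i^\alpha(\lambda h+\mu g)=\lambda I_i^\alpha h+\mu I_i^\alpha g\) pointwise on \(U\).

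Once this is in place, \(\Acal_i^\alpha(U)=I_i^\alpha(C(U))\) is the range of a linear map between real vector spaces, hence a linear subspace of \(C(U)\). Concretely, if \(f=I_i^\alpha h\) and \(g=I_i^\alpha k\) with \(h,k\in C(U)\), then \(\lambda f+\mu g=I_i^\alpha(\lambda h+\mu k)\), and \(\lambda h+\mu k\in C(U)\). The zero function equals \(I_i^\alpha 0\).

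For the common space, I will use that an intersection of linear subspaces of the fixed vector space \(C(U)\) is again a linear subspace. This applies directly to \(\Acal^\alpha(U)=\bigcap_{i=1}^d\Acal_i^\alpha(U)\).

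The only step with any content is continuity of \(I_i^\alpha h\) for \(h\in C(U)\), since \(\Acal_i^\alpha(U)\) is defined as a subset of \(C(U)\). To prove it, substitute \(\xi=x_i s\) and write
\[
(I_i^\alpha h)(x)=\frac{x_i^{\alpha}}{\Gamma(\alpha)}\int_0^1(1-s)^{\alpha-1}h(x_1,\dots,x_{i-1},x_is,x_{i+1},\dots,x_d)\,ds .
\]
The integrand is continuous in \(x\) for each \(s\in[0,1)\). It is dominated by \(\|h\|_\infty(1-s)^{\alpha-1}\), which is integrable on \([0,1]\) because \(\alpha>0\). Dominated convergence therefore gives continuity of the integral in \(x\). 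The prefactor \(x_i^\alpha\) is continuous on \([0,b_i]\), so \(I_i^\alpha h\) is continuous on \(U\).
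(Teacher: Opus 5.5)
Your proof is correct and follows the paper's argument: closure of \(\Acal_i^\alpha(U)\) under linear combinations via \(\lambda I_i^\alpha h+\mu I_i^\alpha k=I_i^\alpha(\lambda h+\mu k)\), followed by the fact that an intersection of subspaces is a subspace. Your continuity check for \(I_i^\alpha h\) is correct but not needed: Definition~\ref{def:Ai} already requires \(f\in C(U)\), and \(\lambda f+\mu g\) is continuous whenever \(f,g\) are, so the extra computation only shows that this requirement is automatic, i.e.\ \(\Acal_i^\alpha(U)=I_i^\alpha(C(U))\).
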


\begin{proof}
Fix \(i\). Let \(f,g\in \Acal_i^\alpha(U)\) and \(\lambda\in\R\). Choose \(h_i,k_i\in C(U)\) such that
\begin{equation}
f=I_i^\alpha h_i,
\qquad
g=I_i^\alpha k_i.
\end{equation}
Then
\begin{equation}
f+\lambda g = I_i^\alpha(h_i+\lambda k_i),
\end{equation}
and \(h_i+\lambda k_i\in C(U)\). Hence \(f+\lambda g\in \Acal_i^\alpha(U)\). Since \(\Acal^\alpha(U)\) is an intersection of vector spaces, it is itself a vector space.
\end{proof}

\begin{lemma}[coordinatewise anchored inversion]\label{lem:Ai-inversion}
Let \(f\in \Acal_i^\alpha(U)\). Then there exists a unique \(h_i\in C(U)\) such that
\begin{equation}
f=I_i^\alpha h_i.
\end{equation}
\end{lemma}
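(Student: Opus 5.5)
Existence is immediate from Definition~\ref{def:Ai}, so the whole content is uniqueness. By linearity of \(I_i^\alpha\) (as in Proposition~\ref{prop:Ai-vector-space}), it suffices to show that \(I_i^\alpha h=0\) on \(U\) with \(h\in C(U)\) forces \(h=0\). The plan is to apply \(I_i^{1-\alpha}\) and use the semigroup property \(I_i^{1-\alpha}I_i^{\alpha}=I_i^{1}\), valid pointwise for continuous functions. This gives
\begin{equation*}
0=(I_i^{1-\alpha}I_i^\alpha h)(x)=(I_i^1 h)(x)=\int_0^{x_i} h(x_1,\dots,x_{i-1},\xi,x_{i+1},\dots,x_d)\,d\xi
\end{equation*}
for every \(x\in U\).

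The next step fixes the other coordinates \(x_j\), \(j\neq i\), and regards the identity as a function of \(x_i\in[0,b_i]\). The map \(\xi\mapsto h(\dots,\xi,\dots)\) is continuous, so by the fundamental theorem of calculus the integral is \(C^1\) in \(x_i\) with derivative \(h(x)\). Since the integral vanishes identically in \(x_i\), we get \(h(x)=0\) for all \(x_i\in(0,b_i]\). Continuity then extends this to \(x_i=0\), using one-sided derivatives at the endpoints. Hence \(h\equiv 0\) on \(U\).

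The only point needing care is that the semigroup identity is applied to a genuinely continuous function, and that the intermediate function \(I_i^\alpha h\) is itself continuous on \(U\). The latter is needed so that applying \(I_i^{1-\alpha}\) is legitimate within the paper's stated framework. I would verify this directly: after substituting \(\xi=x_i t\),
\begin{equation*}
(I_i^\alpha h)(x)=\frac{x_i^\alpha}{\Gamma(\alpha)}\int_0^1(1-t)^{\alpha-1}h(\dots,x_it,\dots)\,dt .
\end{equation*}
Continuity in \(x\) then follows from dominated convergence, since \(h\) is bounded on the compact set \(U\) and \((1-t)^{\alpha-1}\) is integrable.

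Alternatively, one could avoid this check, because here \(I_i^\alpha h\) is identically zero and trivially continuous. Either way, the semigroup property holds, and this is the step I expect to need the most justification. Everything else is routine.
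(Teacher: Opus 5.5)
Your proof is correct and follows the paper's route: fix the coordinates other than $x_i$ and use injectivity of the left Riemann--Liouville integral on $C[0,b_i]$. The difference is that the paper cites this injectivity from the literature, whereas you prove it inline via the semigroup identity $I_i^{1-\alpha}I_i^\alpha=I_i^1$ (which the paper records just before the lemma) and the fundamental theorem of calculus, which makes the argument self-contained.
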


\begin{proof}
Existence is part of Definition~\ref{def:Ai}. For uniqueness, suppose
\begin{equation}
I_i^\alpha h_i = I_i^\alpha \tilde h_i.
\end{equation}
Fix the variables \((x_1,\dots,x_{i-1},x_{i+1},\dots,x_d)\) and regard both sides as functions of \(x_i\in[0,b_i]\). The standard injectivity of the left Riemann--Liouville integral on \(C[0,b_i]\) implies \(h_i=\tilde h_i\) pointwise; see \cite[Chapter~2]{Podlubny1999}, \cite[Chapter~2]{SamkoKilbasMarichev1993}, or \cite[Section~2.1]{KilbasSrivastavaTrujillo2006}.
\end{proof}

\begin{definition}[coordinatewise anchored Caputo derivative]\label{def:anchored-caputo-new}
Fix \(i\in\{1,\dots,d\}\). For \(f\in \Acal_i^\alpha(U)\), let \(h_i\in C(U)\) be the unique function from Lemma~\ref{lem:Ai-inversion} such that
\begin{equation}
f=I_i^\alpha h_i.
\end{equation}
We define the coordinatewise anchored Caputo derivative by
\begin{equation}
(\partial_{C,i}^\alpha f)(x):=h_i(x),
\end{equation}
and thus
\(
\partial_{C,i}^\alpha:\Acal_i^\alpha(U)\to C(U).
\)
\end{definition}

\begin{proposition}[linearity and reconstruction]\label{prop:reconstruction-new}
For each \(i\), the operator
\begin{equation}
\partial_{C,i}^\alpha:\Acal_i^\alpha(U)\to C(U)
\end{equation}
is linear, and every \(f\in\Acal_i^\alpha(U)\) satisfies
\begin{equation}
f = I_i^\alpha(\partial_{C,i}^\alpha f).
\end{equation}
\end{proposition}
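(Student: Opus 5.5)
The plan is to derive both assertions directly from the uniqueness statement in Lemma~\ref{lem:Ai-inversion}, together with the linearity of \(I_i^\alpha\) on \(C(U)\). I would prove the reconstruction identity first, since it is essentially definitional. For \(f\in\Acal_i^\alpha(U)\), Definition~\ref{def:anchored-caputo-new} sets \(\partial_{C,i}^\alpha f:=h_i\), where \(h_i\in C(U)\) is the unique function with \(f=I_i^\alpha h_i\). Substituting gives
\begin{equation*}
f = I_i^\alpha h_i = I_i^\alpha(\partial_{C,i}^\alpha f),
\end{equation*}
which is the claimed reconstruction formula.

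For linearity, I would take \(f,g\in\Acal_i^\alpha(U)\) and \(\lambda\in\R\). By Proposition~\ref{prop:Ai-vector-space}, \(f+\lambda g\in\Acal_i^\alpha(U)\), so \(\partial_{C,i}^\alpha(f+\lambda g)\) is defined. Write \(h:=\partial_{C,i}^\alpha f\) and \(k:=\partial_{C,i}^\alpha g\). Directly from the integral formula \eqref{eq:coordinate-RL-integral}, \(I_i^\alpha\) is linear on \(C(U)\). Combining this with the reconstruction identity just proved yields
\begin{equation*}
I_i^\alpha(h+\lambda k) = I_i^\alpha h+\lambda I_i^\alpha k = f+\lambda g .
\end{equation*}
The function \(h+\lambda k\) is continuous on \(U\), so it is a continuous preimage of \(f+\lambda g\) under \(I_i^\alpha\). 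The uniqueness part of Lemma~\ref{lem:Ai-inversion} then forces \(\partial_{C,i}^\alpha(f+\lambda g)=h+\lambda k\), which is exactly linearity.

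I do not expect a genuine obstacle here. All the analytic content, namely injectivity of the Riemann--Liouville integral on continuous functions, is already contained in Lemma~\ref{lem:Ai-inversion}. The only point needing care is to invoke uniqueness, rather than merely existence, when identifying \(\partial_{C,i}^\alpha(f+\lambda g)\). Without uniqueness the operator would not even be well defined, and linearity could not be concluded from the displayed identity.
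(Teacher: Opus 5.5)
Your proposal is correct and follows the paper's proof. The reconstruction identity is the defining property of $\partial_{C,i}^\alpha f$, and linearity follows from Proposition~\ref{prop:Ai-vector-space} together with the uniqueness in Lemma~\ref{lem:Ai-inversion}; you spell out the step the paper leaves implicit, namely that $h+\lambda k$ is a continuous $I_i^\alpha$-preimage of $f+\lambda g$.
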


\begin{proof}
Linearity follows from Proposition~\ref{prop:Ai-vector-space} and uniqueness in Lemma~\ref{lem:Ai-inversion}. The reconstruction formula is exactly the defining property of \(\partial_{C,i}^\alpha f\).
\end{proof}

\begin{proposition}[agreement with the classical Caputo derivative]\label{prop:anchored-vs-classical-new}
Fix \(i\in\{1,\dots,d\}\). Let \(f\in C(U)\) be continuously differentiable in the variable \(x_i\), with
\(\partial_i f\in C(U)\), and assume that
\begin{equation}
f(x_1,\dots,x_{i-1},0,x_{i+1},\dots,x_d)=0.
\end{equation}
Assume also that the classical partial Caputo derivative
\begin{equation}
(\partial_i^\alpha f)(x)
:=\frac{1}{\Gamma(1-\alpha)}
\int_0^{x_i}(x_i-\xi)^{-\alpha}\,\partial_i f(x_1,\dots,x_{i-1},\xi,x_{i+1},\dots,x_d)\,d\xi
\end{equation}
exists and belongs to \(C(U)\). Then \(f\in\Acal_i^\alpha(U)\), and
\begin{equation}
\partial_{C,i}^\alpha f = \partial_i^\alpha f.
\end{equation}
\end{proposition}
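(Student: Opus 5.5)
The plan is to show directly that \(f=I_i^\alpha(\partial_i^\alpha f)\). Since \(h:=\partial_i^\alpha f\in C(U)\) by assumption, this places \(f\) in \(\Acal_i^\alpha(U)\) with witness \(h\). The uniqueness in Lemma~\ref{lem:Ai-inversion} together with Definition~\ref{def:anchored-caputo-new} then gives \(\partial_{C,i}^\alpha f=h=\partial_i^\alpha f\). So everything reduces to the single identity \(I_i^\alpha I_i^{1-\alpha}\partial_i f = f\).

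To prove it, I will first write \(\partial_i^\alpha f = I_i^{1-\alpha}(\partial_i f)\), which is just the definition \eqref{eq:coordinate-Caputo-derivative} with \(\beta=1-\alpha\) in \eqref{eq:coordinate-RL-integral}. Since \(\partial_i f\in C(U)\), the semigroup property applies and yields
\[
I_i^\alpha \partial_i^\alpha f = I_i^\alpha I_i^{1-\alpha}\partial_i f = I_i^{1}\partial_i f .
\]
By \eqref{eq:coordinate-RL-integral} with \(\beta=1\) and \(\Gamma(1)=1\),
\[
(I_i^1\partial_i f)(x)=\int_0^{x_i}\partial_i f(x_1,\dots,\xi,\dots,x_d)\,d\xi .
\]
For each fixed choice of the other coordinates, the function \(\xi\mapsto f(\dots,\xi,\dots)\) is \(C^1\) on \([0,b_i]\). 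The fundamental theorem of calculus therefore gives \(f(x)-f(x_1,\dots,0,\dots,x_d)=f(x)\), using the vanishing hypothesis on \(\{x_i=0\}\). This establishes \(f=I_i^\alpha h\).

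The main obstacle is justifying the semigroup step, which is stated for continuous functions. Here \(\partial_i f\) is continuous, so it applies verbatim. If one wanted to be careful anyway, I would verify it directly via Fubini on the triangle \(\{0\le\xi\le s\le x_i\}\). The resulting Beta integral is
\[
\int_\xi^{x_i}(x_i-s)^{\alpha-1}(s-\xi)^{-\alpha}\,ds=\Gamma(\alpha)\Gamma(1-\alpha),
\]
and absolute integrability holds because the kernels are integrable and \(\partial_i f\) is bounded on the compact set \(U\). Finally, the vanishing condition is essential: without it, the reconstruction gives \(f-f|_{x_i=0}\) rather than \(f\), which is consistent with Remark~\ref{rem:no-wrong-identity}.
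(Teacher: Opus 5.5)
Your proposal is correct and follows essentially the same route as the paper: write \(\partial_i^\alpha f=I_i^{1-\alpha}(\partial_i f)\), then use the semigroup property together with the fundamental theorem of calculus and the vanishing condition to get \(f=I_i^1(\partial_i f)=I_i^\alpha(\partial_i^\alpha f)\), and conclude by the uniqueness in Lemma~\ref{lem:Ai-inversion}. Your optional Fubini/Beta-integral check of the semigroup step is a harmless extra; the paper simply cites that property.
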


\begin{proof}
Because \(f\) is \(C^1\) in the variable \(x_i\) and vanishes on the anchoring hyperplane \(\{x_i=0\}\), the fundamental theorem of calculus in that variable gives
\begin{equation}
f(x)=\int_0^{x_i}\partial_i f(x_1,\dots,x_{i-1},\xi,x_{i+1},\dots,x_d)\,d\xi
= I_i^1(\partial_i f)(x).
\end{equation}
By the definition of the classical Caputo derivative,
\begin{equation}
\partial_i^\alpha f=I_i^{1-\alpha}(\partial_i f).
\end{equation}
The assumed continuity of \(\partial_i^\alpha f\) therefore implies
\begin{equation}
f=I_i^1(\partial_i f)=I_i^\alpha\bigl(I_i^{1-\alpha}\partial_i f\bigr)
=I_i^\alpha(\partial_i^\alpha f).
\end{equation}
Hence \(f\in\Acal_i^\alpha(U)\). Since \(\partial_i^\alpha f\in C(U)\), uniqueness in Lemma~\ref{lem:Ai-inversion} shows that \(\partial_i^\alpha f\) is the unique continuous function whose \(I_i^\alpha\)-integral is \(f\). Therefore it coincides with \(\partial_{C,i}^\alpha f\).
\end{proof}

For each \(i\), we define
\begin{equation}
X_i^\alpha(x):=\frac{x_i^\alpha}{\Gamma(1+\alpha)}.
\end{equation}
Then \(X_i^\alpha=I_i^\alpha 1\), so \(X_i^\alpha\in \Acal_i^\alpha(U)\). However, for \(d\ge2\), in general
\begin{equation}\label{rem:Xi-membership}
X_i^\alpha\notin \Acal^\alpha(U),
\end{equation}
because \(X_i^\alpha\) does not vanish on the hyperplanes \(\{x_j=0\}\) for \(j\neq i\).

\section{Coordinatewise fractional derivations at a point}

This section contains the algebraic core of the paper. A natural normalization is
\begin{equation}
D_i(I_i^\alpha h)=h(p).
\end{equation}
However, this condition is not preserved under addition. It is therefore unsuitable as the defining property of a vector space of derivations. The correct linear framework is to define the ambient vector space first and to select the normalized operators only afterward.

In particular, the objects introduced in this section are not derivations on a full function algebra in the classical differential-geometric sense. Rather, they are coordinatewise anchored linear functionals singled out by the reconstruction property associated with the operator \(I_i^\alpha\). No Leibniz rule is assumed in the definition, and no Leibniz rule is used in any of the constructions that follow.

\begin{definition}[\(i\)-th fractional derivation]\label{def:frac-deriv-new}
Fix \(i\in\{1,\dots,d\}\). An \emph{\(i\)-th fractional derivation of order \(\alpha\) at \(p\)} is a linear map
\begin{equation}
D_i:\Acal_i^\alpha(U)\to\R
\end{equation}
for which there exists a scalar \(\lambda_i(D_i)\in\R\) such that
\begin{equation}
D_i(I_i^\alpha h)=\lambda_i(D_i)\,h(p)
\qquad\text{for every } h\in C(U).
\end{equation}
We write \(\Dcal_i^\alpha(p)\) for the set of all such maps.
\end{definition}

For \(D_i\in \Dcal_i^\alpha(p)\), the scalar \(\lambda_i(D_i)\) is uniquely determined. Indeed, taking \(h\equiv 1\) in Definition~\ref{def:frac-deriv-new} gives
\begin{equation}
\lambda_i(D_i)=D_i(I_i^\alpha 1).
\end{equation}
Thus the normalization parameter is part of the structure of the functional and not an additional choice. Because every element of \(\Acal_i^\alpha(U)\) has the form \(I_i^\alpha h\) for a unique \(h\in C(U)\), the identity in Definition~\ref{def:frac-deriv-new} determines \(D_i\) on its whole domain once \(\lambda_i(D_i)\) is fixed.

\begin{definition}[normalized \(i\)-th fractional derivation]
An element \(D_i\in \Dcal_i^\alpha(p)\) is said to be \emph{normalized} if \(\lambda_i(D_i)=1\).
\end{definition}

The term ``fractional derivation'' is used in this paper in a coordinate-anchored representational sense, not in the differential-graded-algebra sense. The spaces \(\Acal_i^\alpha(U)\) are not treated as algebras, no product structure is invoked on them, and no Leibniz rule is imposed. The objects of Definition~\ref{def:frac-deriv-new} should be thought of as coordinate fractional tangent generators selected by the anchored inversion property, and the use of the word ``derivation'' is justified only by analogy with the classical case where evaluation of a partial derivative at a point is the prototypical scalar derivation. 

\begin{theorem}[representation theorem]\label{thm:representation-new}
For each \(i\in\{1,\dots,d\}\), the space \(\Dcal_i^\alpha(p)\) is one-dimensional and
\begin{equation}
\Dcal_i^\alpha(p)=\R\cdot \bigl((\partial_{C,i}^\alpha)|_p\bigr).
\end{equation}
More precisely, every \(D_i\in \Dcal_i^\alpha(p)\) satisfies
\begin{equation}
D_i(f)=\lambda_i(D_i)\,(\partial_{C,i}^\alpha f)(p)
\qquad\text{for all } f\in \Acal_i^\alpha(U).
\end{equation}
\end{theorem}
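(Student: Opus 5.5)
The plan is to prove the displayed formula first and then deduce the one-dimensionality from it. Fix \(i\) and \(D_i\in\Dcal_i^\alpha(p)\), and let \(f\in\Acal_i^\alpha(U)\). By Lemma~\ref{lem:Ai-inversion} and Definition~\ref{def:anchored-caputo-new}, \(f=I_i^\alpha h_i\) with \(h_i=\partial_{C,i}^\alpha f\in C(U)\). Indeed, Proposition~\ref{prop:reconstruction-new} gives \(f=I_i^\alpha(\partial_{C,i}^\alpha f)\). Applying the defining identity of Definition~\ref{def:frac-deriv-new} with \(h=\partial_{C,i}^\alpha f\) yields
\begin{equation*}
D_i(f)=D_i\bigl(I_i^\alpha(\partial_{C,i}^\alpha f)\bigr)=\lambda_i(D_i)\,(\partial_{C,i}^\alpha f)(p).
\end{equation*}
This gives the formula and shows \(\Dcal_i^\alpha(p)\subseteq\R\cdot(\partial_{C,i}^\alpha)|_p\).

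For the reverse inclusion, I would check that \(E:=(\partial_{C,i}^\alpha)|_p\), that is \(f\mapsto(\partial_{C,i}^\alpha f)(p)\), belongs to \(\Dcal_i^\alpha(p)\). It is linear because \(\partial_{C,i}^\alpha\) is linear (Proposition~\ref{prop:reconstruction-new}) and point evaluation is linear. For any \(h\in C(U)\), uniqueness in Lemma~\ref{lem:Ai-inversion} gives \(\partial_{C,i}^\alpha(I_i^\alpha h)=h\), so \(E(I_i^\alpha h)=h(p)\). Hence \(E\) lies in \(\Dcal_i^\alpha(p)\) with \(\lambda_i(E)=1\). Every scalar multiple \(cE\) then satisfies the definition with \(\lambda_i=c\). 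It also follows that \(\Dcal_i^\alpha(p)\) is a vector space, equal to \(\R E\).

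One-dimensionality requires \(E\neq0\). Taking \(h\equiv1\) gives \(E(X_i^\alpha)=E(I_i^\alpha1)=1\), so \(E\) is nonzero and \(\dim\Dcal_i^\alpha(p)=1\).

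The only delicate point is the identity \(\partial_{C,i}^\alpha(I_i^\alpha h)=h\) for arbitrary continuous \(h\). It rests entirely on the injectivity in Lemma~\ref{lem:Ai-inversion}, together with the fact that \(I_i^\alpha h\) is continuous, so that it lies in \(\Acal_i^\alpha(U)\). Continuity is standard for continuous \(h\) on the compact box. Apart from this, the argument is bookkeeping, and no step should present a real obstacle.
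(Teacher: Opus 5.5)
Your proof is correct and follows essentially the same route as the paper. You apply \(D_i\) to the reconstruction \(f=I_i^\alpha(\partial_{C,i}^\alpha f)\) to get the formula, then check \((\partial_{C,i}^\alpha)|_p(I_i^\alpha h)=h(p)\) for the reverse inclusion. The differences are minor: the paper verifies closure under linear combinations directly, whereas you read the vector-space structure off from \(\Dcal_i^\alpha(p)=\R\cdot(\partial_{C,i}^\alpha)|_p\); you also make explicit two points the paper leaves implicit, namely \(E\neq0\) (via \(E(X_i^\alpha)=1\)) and the continuity of \(I_i^\alpha h\), which puts it in \(\Acal_i^\alpha(U)\).
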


\begin{proof}
Let \(D_i,E_i\in \Dcal_i^\alpha(p)\), and let \(a,b\in\R\). For every \(h\in C(U)\),
\begin{equation}
(aD_i+bE_i)(I_i^\alpha h)
=
a\lambda_i(D_i)h(p)+b\lambda_i(E_i)h(p)
=
\bigl(a\lambda_i(D_i)+b\lambda_i(E_i)\bigr)h(p).
\end{equation}
Hence \(aD_i+bE_i\in \Dcal_i^\alpha(p)\), so \(\Dcal_i^\alpha(p)\) is a vector space.

Now let \(f\in \Acal_i^\alpha(U)\). By Proposition~\ref{prop:reconstruction-new},
\begin{equation}
f=I_i^\alpha(\partial_{C,i}^\alpha f).
\end{equation}
Applying \(D_i\) gives
\begin{equation}
D_i(f)
=
D_i(I_i^\alpha(\partial_{C,i}^\alpha f))
=
\lambda_i(D_i)\,(\partial_{C,i}^\alpha f)(p).
\end{equation}
Thus every \(D_i\) is a scalar multiple of \((\partial_{C,i}^\alpha)|_p\). Finally, for \(h\in C(U)\),
\begin{equation}
(\partial_{C,i}^\alpha)|_p\,(I_i^\alpha h)
=
(\partial_{C,i}^\alpha(I_i^\alpha h))(p)
=
h(p),
\end{equation}
so \((\partial_{C,i}^\alpha)|_p\in \Dcal_i^\alpha(p)\). The claim follows.
\end{proof}

\begin{corollary}\label{cor:normalized-new}
For each \(i\), the unique normalized \(i\)-th fractional derivation is
\begin{equation}
(\partial_{C,i}^\alpha)|_p.
\end{equation}
\end{corollary}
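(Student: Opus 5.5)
The corollary should follow directly from Theorem~\ref{thm:representation-new}. It contains two claims: that \((\partial_{C,i}^\alpha)|_p\) is normalized, and that it is the only normalized element of \(\Dcal_i^\alpha(p)\). I will verify each in turn.

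\emph{Normalization.} The final step of the proof of Theorem~\ref{thm:representation-new} shows that \((\partial_{C,i}^\alpha)|_p(I_i^\alpha h)=h(p)\) for every \(h\in C(U)\). This uses Proposition~\ref{prop:reconstruction-new} together with the uniqueness in Lemma~\ref{lem:Ai-inversion}, which give \(\partial_{C,i}^\alpha(I_i^\alpha h)=h\). Taking \(h\equiv1\), and recalling that \(\lambda_i(D_i)=D_i(I_i^\alpha 1)\) is intrinsically determined, we obtain
\begin{equation}
\lambda_i\bigl((\partial_{C,i}^\alpha)|_p\bigr)=1 .
\end{equation}

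\emph{Uniqueness.} Let \(D_i\in\Dcal_i^\alpha(p)\) be normalized, so \(\lambda_i(D_i)=1\). The representation formula of Theorem~\ref{thm:representation-new} then gives
\begin{equation}
D_i(f)=(\partial_{C,i}^\alpha f)(p)\qquad\text{for all } f\in\Acal_i^\alpha(U).
\end{equation}
Hence \(D_i\) and \((\partial_{C,i}^\alpha)|_p\) agree as maps on \(\Acal_i^\alpha(U)\).

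I do not expect a real obstacle here. The only point needing care is that \(\lambda_i\) is well defined, so that ``normalized'' is an unambiguous condition. The paper has already noted this right after Definition~\ref{def:frac-deriv-new}, via \(\lambda_i(D_i)=D_i(I_i^\alpha 1)\), and this uses that \(1\in C(U)\).
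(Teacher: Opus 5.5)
Your proposal is correct and takes essentially the same approach as the paper, which states the corollary without separate proof as an immediate consequence of Theorem~\ref{thm:representation-new}. The theorem's final step gives \(\lambda_i\bigl((\partial_{C,i}^\alpha)|_p\bigr)=1\), and its representation formula \(D_i(f)=\lambda_i(D_i)\,(\partial_{C,i}^\alpha f)(p)\) forces every normalized \(D_i\) to coincide with \((\partial_{C,i}^\alpha)|_p\), exactly as you argue.
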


\section{Fractional tangent space and dual basis}

\begin{definition}[fractional tangent space]
The fractional tangent space of order \(\alpha\) at \(p\) is
\begin{equation}
T_p^\alpha U := \bigoplus_{i=1}^d \Dcal_i^\alpha(p).
\end{equation}
For each \(i\in\{1,\dots,d\}\), let
\begin{equation}
e_i^\alpha(p)\in T_p^\alpha U
\end{equation}
denote the element whose \(i\)-th component is \((\partial_{C,i}^\alpha)|_p\) and whose other components are zero.
\end{definition}

The space \(T_p^\alpha U\) is a direct sum of the one-dimensional coordinate derivation spaces \(\Dcal_i^\alpha(p)\). Since the individual summands act a priori on different domains \(\Acal_i^\alpha(U)\), this construction should not be interpreted as a space of derivations on a single common function algebra in the classical differential-geometric sense. Its primary role is to package the coordinatewise anchored fractional directions into a finite-dimensional linear object indexed by the chosen coordinates.

The common anchored space \(\Acal^\alpha(U)=\bigcap_{i=1}^d \Acal_i^\alpha(U)\) of Definition~\ref{def:Ai} provides a natural common domain on which the elements of \(T_p^\alpha U\) act as scalar-valued linear functionals. At interior base points this action is faithful, as recorded in Proposition~\ref{prop:tangent-action-common} below.

\begin{lemma}[anchored power rule]\label{lem:anchored-power-rule}
Fix \(i\in\{1,\dots,d\}\), and let
\begin{equation}
M(x)=\prod_{k=1}^d (X_k^\alpha(x))^{m_k},
\qquad
m_k\in\mathbb Z_{\ge0},
\end{equation}
with \(m_i\ge1\). Then \(M\in\Acal_i^\alpha(U)\), and
\begin{equation}
\partial_{C,i}^\alpha M
=
\frac{\Gamma(m_i\alpha+1)}
{\Gamma((m_i-1)\alpha+1)\Gamma(1+\alpha)}
(X_i^\alpha)^{m_i-1}\prod_{j\neq i}(X_j^\alpha)^{m_j}.
\end{equation}
In particular, if \(m_k\ge1\) for every \(k\), then \(M\in\Acal^\alpha(U)\).
\end{lemma}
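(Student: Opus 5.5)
The plan is to exhibit $M$ explicitly as $I_i^\alpha h$ for a continuous $h$, namely the right-hand side of the claimed formula. Uniqueness in Lemma~\ref{lem:Ai-inversion} together with Definition~\ref{def:anchored-caputo-new} then identifies $\partial_{C,i}^\alpha M$ with that $h$. The point is that $I_i^\alpha$ acts only on the $i$-th variable, so the factors $(X_j^\alpha)^{m_j}$ with $j\neq i$ are constants for the integral in \eqref{eq:coordinate-RL-integral} and pull out. We are thus reduced to a one-variable computation with powers of $x_i$.

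\textbf{The one-variable identity.} Write $(X_i^\alpha)^{m_i}=x_i^{m_i\alpha}/\Gamma(1+\alpha)^{m_i}$. Set
\begin{equation*}
h(x):=\frac{\Gamma(m_i\alpha+1)}{\Gamma((m_i-1)\alpha+1)\Gamma(1+\alpha)}(X_i^\alpha)^{m_i-1}\prod_{j\neq i}(X_j^\alpha)^{m_j}.
\end{equation*}
This $h$ is continuous on $U$, since $m_i-1\ge0$ and all exponents $m_k\alpha\ge0$ (with $x^0\equiv1$). The key step is the classical Beta-integral formula
\begin{equation*}
I^\alpha\bigl(\xi^{\gamma}\bigr)(x_i)=\frac{\Gamma(\gamma+1)}{\Gamma(\gamma+\alpha+1)}x_i^{\gamma+\alpha},\qquad \gamma\ge 0,
\end{equation*}
applied with $\gamma=(m_i-1)\alpha$. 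To verify it, substitute $\xi=x_it$ in $\frac{1}{\Gamma(\alpha)}\int_0^{x_i}(x_i-\xi)^{\alpha-1}\xi^\gamma\,d\xi$, which gives $x_i^{\gamma+\alpha}B(\alpha,\gamma+1)/\Gamma(\alpha)$, and then use $B=\Gamma(\alpha)\Gamma(\gamma+1)/\Gamma(\gamma+\alpha+1)$. At $x_i=0$ both sides vanish, so the identity also holds there.

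\textbf{Checking the constant.} It remains to confirm that $I_i^\alpha h=M$. The $\Gamma(m_i\alpha+1)$ in $h$ cancels the denominator $\Gamma(\gamma+\alpha+1)=\Gamma(m_i\alpha+1)$ coming from the Beta formula. The factor $\Gamma((m_i-1)\alpha+1)$ cancels $\Gamma(\gamma+1)$. The extra factor $1/\Gamma(1+\alpha)$ supplies the one missing normalization, turning $\Gamma(1+\alpha)^{-(m_i-1)}$ into $\Gamma(1+\alpha)^{-m_i}$. Hence $I_i^\alpha h=M$, so $M\in\Acal_i^\alpha(U)$ and the stated formula for $\partial_{C,i}^\alpha M$ follows.

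\textbf{The final assertion.} If $m_k\ge1$ for every $k$, the first part applies with each index in turn, giving $M\in\Acal_k^\alpha(U)$ for all $k$. Therefore $M\in\bigcap_k\Acal_k^\alpha(U)=\Acal^\alpha(U)$.

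The only real obstacle is bookkeeping of the Gamma constants together with the edge case $m_i=1$. There $\gamma=0$, $h$ no longer depends on $x_i$, and the formula reduces to $X_i^\alpha=I_i^\alpha1$, which the Beta computation above already covers.
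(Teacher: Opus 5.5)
Your proposal is correct and follows the paper's proof: you exhibit the stated right-hand side as a continuous preimage of $M$ under $I_i^\alpha$ via the Riemann--Liouville power formula with exponent $(m_i-1)\alpha$, treating the other variables as constants, and then conclude by uniqueness of the anchored inverse. Your Gamma-constant bookkeeping, the Beta-integral verification, and your handling of the case $m_i=1$ are all correct.
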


\begin{proof}
Only the \(i\)-th variable is relevant. Write
\begin{equation}
M=(X_i^\alpha)^{m_i}\prod_{j\neq i}(X_j^\alpha)^{m_j}.
\end{equation}
Set
\begin{equation}
H_i
:=
\frac{\Gamma(m_i\alpha+1)}
{\Gamma((m_i-1)\alpha+1)\Gamma(1+\alpha)}
(X_i^\alpha)^{m_i-1}\prod_{j\neq i}(X_j^\alpha)^{m_j}.
\end{equation}
The function \(H_i\) is continuous on \(U\), since \(m_i\ge1\) and \(\alpha>0\).
Using the standard Riemann--Liouville power formula
\begin{equation}
I_i^\alpha x_i^\beta
=
\frac{\Gamma(\beta+1)}{\Gamma(\beta+\alpha+1)}
x_i^{\beta+\alpha},
\qquad \beta>-1,
\end{equation}
with \(\beta=(m_i-1)\alpha\), and keeping the remaining variables fixed, we obtain
\begin{equation}
I_i^\alpha H_i
=
(X_i^\alpha)^{m_i}\prod_{j\neq i}(X_j^\alpha)^{m_j}
=
M.
\end{equation}
Hence \(M\in\Acal_i^\alpha(U)\). By the uniqueness of the anchored inverse in
Lemma~\ref{lem:Ai-inversion}, the unique continuous function whose
\(I_i^\alpha\)-integral is \(M\) is precisely \(H_i\). Therefore
\begin{equation}
\partial_{C,i}^\alpha M=H_i.
\end{equation}
If \(m_k\ge1\) for every \(k\), the same argument applies in every coordinate
direction, so \(M\in\bigcap_k\Acal_k^\alpha(U)=\Acal^\alpha(U)\).
\end{proof}

\begin{proposition}[action on the common anchored space]\label{prop:tangent-action-common}
Assume that the base point \(p\) is an interior point of \(U\), that is,
\begin{equation}
p_i>0\qquad\text{for all }i=1,\dots,d.
\end{equation}
Then each element
\begin{equation}
v=\sum_{i=1}^d c_i\, e_i^\alpha(p)\in T_p^\alpha U
\end{equation}
defines a linear functional
\begin{equation}
v:\Acal^\alpha(U)\to \R,
\qquad
v(f):=\sum_{i=1}^d c_i\,(\partial_{C,i}^\alpha f)(p).
\end{equation}
Moreover, the map
\begin{equation}
T_p^\alpha U\longrightarrow (\Acal^\alpha(U))^*,
\qquad
v\longmapsto v(\,\cdot\,),
\end{equation}
is injective. Equivalently, the functionals
\begin{equation}
f\longmapsto (\partial_{C,i}^\alpha f)(p),
\qquad i=1,\dots,d,
\end{equation}
are linearly independent on \(\Acal^\alpha(U)\).
\end{proposition}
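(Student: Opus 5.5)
Well-definedness comes first. Since \(\Acal^\alpha(U)\subset\Acal_i^\alpha(U)\) for every \(i\), each \(\partial_{C,i}^\alpha f\) is defined and continuous for \(f\in\Acal^\alpha(U)\). Hence \(v(f)=\sum_i c_i(\partial_{C,i}^\alpha f)(p)\) makes sense. Linearity in \(f\) follows from the linearity in Proposition~\ref{prop:reconstruction-new}, and linearity of \(v\mapsto v(\cdot)\) is immediate. By Theorem~\ref{thm:representation-new}, every element of \(T_p^\alpha U\) has the form \(\sum_i c_i e_i^\alpha(p)\) with unique coefficients \(c_i\). Injectivity is therefore equivalent to the stated linear independence of the \(d\) functionals \(f\mapsto(\partial_{C,i}^\alpha f)(p)\).

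To prove independence, I will exhibit test functions \(f_1,\dots,f_d\in\Acal^\alpha(U)\) for which the matrix \(A_{ji}:=(\partial_{C,i}^\alpha f_j)(p)\) is invertible. The natural candidates are the monomials of Lemma~\ref{lem:anchored-power-rule}. Let \(P:=\prod_{k}X_k^\alpha\) and \(f_j:=X_j^\alpha P\), so that \(f_j\) has exponent \(m_j=2\) and \(m_k=1\) for \(k\neq j\). All exponents are \(\ge1\), so \(f_j\in\Acal^\alpha(U)\) by the lemma.

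The lemma with \(m=1\) gives coefficient \(\Gamma(\alpha+1)/(\Gamma(1)\Gamma(1+\alpha))=1\), so \(\partial_{C,i}^\alpha f_j = f_j/X_i^\alpha\) for \(i\neq j\). For \(i=j\) it gives \(\gamma\, f_j/X_j^\alpha\) with
\[
\gamma:=\frac{\Gamma(2\alpha+1)}{\Gamma(\alpha+1)^2}.
\]
Set \(y_k:=X_k^\alpha(p)\), which is positive because \(p\) is interior, and \(Q:=P(p)>0\). Then
\[
A_{ji}=Q\,\frac{y_j}{y_i}\bigl(1+(\gamma-1)\delta_{ij}\bigr).
\]
Hence \(A=Q\,\mathrm{diag}(y)\,\bigl(J+(\gamma-1)I\bigr)\,\mathrm{diag}(y)^{-1}\), where \(J\) is the all-ones matrix. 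Its determinant is \(Q^d(\gamma-1)^{d-1}(\gamma-1+d)\).

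The main obstacle is confirming that \(\gamma\neq1\), so that this determinant is nonzero. The ratio \(\Gamma(2\alpha+1)/\Gamma(\alpha+1)^2=1/B(\alpha+1,\alpha+1)\cdot\frac{1}{2\alpha+1}\) equals \(\binom{2\alpha}{\alpha}\). This is strictly increasing in \(\alpha\) by log-convexity of \(\Gamma\), and it equals \(1\) at \(\alpha=0\), so \(\gamma>1\) for \(\alpha\in(0,1)\). Then \(\gamma-1>0\) and \(\gamma-1+d>0\), and \(A\) is invertible.

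If I want to avoid this Gamma estimate, a cleaner fallback is to choose the family so that the evaluation matrix is triangular. For instance, take \(g_0:=P\) and \(g_j:=X_j^\alpha P\), and compare \(v(g_j)-\gamma\,(y_j)\,v(g_0)\)-type combinations. The simplest option is to use exponents \(m_j=2\) only when needed. Either way, the argument ends the same way: \(\sum_i c_i(\partial_{C,i}^\alpha f_j)(p)=0\) for all \(j\) together with \(\det A\neq0\) forces \(c=0\).

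The positivity \(p_i>0\) is used exactly to make the \(y_k\) nonzero. At boundary points all these evaluations degenerate, which is why the interior hypothesis appears in the statement.
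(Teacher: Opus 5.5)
Your proof is correct and is essentially the paper's argument. It uses the same test functions $f_j=(X_j^\alpha)^2\prod_{k\neq j}X_k^\alpha$, the same evaluation matrix and determinant $Q^d(\gamma-1)^{d-1}(\gamma-1+d)$ (you get it by similarity to $J+(\gamma-1)I$, the paper via the matrix determinant lemma on $(c_\alpha-1)I+uv^T$), and the same log-convexity fact $\gamma=c_\alpha>1$. Your ``fallback'' paragraph can be dropped: with $g_0=P$ and $g_j=X_j^\alpha P$ the right combination is $v(g_j)-y_j\,v(g_0)=Q(\gamma-1)c_j$, not the $\gamma y_j$ version you wrote, and this still needs $\gamma\neq1$, so it does not avoid the Gamma estimate.
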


\begin{proof}
Linearity is immediate from the linearity of the coordinatewise anchored Caputo derivatives.

It remains to prove injectivity. Set
\begin{equation}
q_i:=X_i^\alpha(p)=\frac{p_i^\alpha}{\Gamma(1+\alpha)}>0,
\qquad
P:=\prod_{k=1}^d q_k,
\end{equation}
and define, for \(j=1,\dots,d\),
\begin{equation}
f_j(x):=(X_j^\alpha(x))^2\prod_{k\neq j}X_k^\alpha(x).
\end{equation}
By Lemma~\ref{lem:anchored-power-rule}, each \(f_j\) belongs to \(\Acal^\alpha(U)\), because every coordinate exponent in \(f_j\) is at least one. The same lemma gives the required anchored derivatives of \(f_j\) in each coordinate direction.

We compute the matrix
\begin{equation}
M_{ij}:=(\partial_{C,i}^\alpha f_j)(p),
\qquad i,j=1,\dots,d.
\end{equation}
Using the one-dimensional power rule for the anchored Caputo derivative gives
\begin{equation}
M_{ij}
=
\begin{cases}
\displaystyle
\frac{\Gamma(2\alpha+1)}{\Gamma(1+\alpha)^2}\,
\prod_{k=1}^d q_k,
& i=j,\\[1.1em]
\displaystyle
q_j^2\prod_{k\neq i,j}q_k,
& i\neq j.
\end{cases}
\end{equation}
Equivalently, with
\begin{equation}
c_\alpha:=\frac{\Gamma(2\alpha+1)}{\Gamma(1+\alpha)^2},
\end{equation}
we may write
\begin{equation}
M_{ij}=P\,N_{ij},
\qquad
N_{ij}=
\begin{cases}
c_\alpha, & i=j,\\[0.3em]
q_j/q_i, & i\neq j.
\end{cases}
\end{equation}
The matrix \(N\) has the form
\begin{equation}
N=(c_\alpha-1)I+uv^T,
\qquad
u_i=\frac1{q_i},\quad v_j=q_j.
\end{equation}
We first note that \(c_\alpha>1\) for \(0<\alpha<1\). Indeed, the strict log-convexity of the Gamma function gives
\begin{equation}
\Gamma(1+\alpha)^2
<
\Gamma(1)\Gamma(1+2\alpha)
=
\Gamma(1+2\alpha),
\end{equation}
and hence \(c_\alpha>1\). Thus \(c_\alpha-1\neq0\). Since
\begin{equation}
v^Tu=\sum_{i=1}^d q_i\frac1{q_i}=d,
\end{equation}
the matrix determinant lemma, applied with the invertible matrix
\((c_\alpha-1)I\), yields
\begin{equation}
\det N
=
(c_\alpha-1)^d\left(1+\frac{v^Tu}{c_\alpha-1}\right)
=
(c_\alpha-1)^{d-1}(c_\alpha+d-1).
\end{equation}
Therefore
\begin{equation}
\det M
=
P^d\det N
=
P^d(c_\alpha-1)^{d-1}(c_\alpha+d-1)
\neq0.
\end{equation}

Now suppose
\begin{equation}
\sum_{i=1}^d c_i\,(\partial_{C,i}^\alpha f)(p)=0
\qquad\text{for every }f\in\Acal^\alpha(U).
\end{equation}
Testing this identity with \(f=f_j\), \(j=1,\dots,d\), gives
\begin{equation}
\sum_{i=1}^d c_i M_{ij}=0
\qquad j=1,\dots,d.
\end{equation}
Since \(M\) and hence \(M^T\) is invertible, \(c_1=\cdots=c_d=0\). This proves the asserted linear independence and therefore the injectivity of the map
\begin{equation}
T_p^\alpha U\to(\Acal^\alpha(U))^*.
\end{equation}
\end{proof}

The separating determinant computed above equals
\begin{equation}
\label{rem:faithful-all-alpha}
P^d(c_\alpha-1)^{d-1}(c_\alpha+d-1),
\qquad
c_\alpha=\frac{\Gamma(1+2\alpha)}{\Gamma(1+\alpha)^2}.
\end{equation}
By strict log-convexity of the Gamma function, \(c_\alpha>1\) for every
\(\alpha>0\). Hence, for every \(0<\alpha<1\) and every interior base point, the
action of \(T_p^\alpha U\) on \(\Acal^\alpha(U)\) is faithful. Moreover
\(c_\alpha\to2\) as \(\alpha\to1^-\), so the separating determinant has the
expected nonzero integer-order limit. We do not use, and do not claim here, a
stronger convergence theorem for the anchored spaces or operators as
\(\alpha\to1^-\).

\begin{remark}[boundary base points]\label{rem:boundary-base-points}
The interior assumption in Proposition~\ref{prop:tangent-action-common} is natural. If the base point lies on one or more anchoring hyperplanes, the common anchored space \(\Acal^\alpha(U)\) may no longer separate all coordinate fractional directions. Thus the abstract vector space \(T_p^\alpha U=\bigoplus_i\Dcal_i^\alpha(p)\) remains \(d\)-dimensional by definition, but its realization as functionals on the common anchored space can lose rank at boundary points.
\end{remark}

\begin{remark}\label{rem:Xi-not-in-common}
Note that the linear ``coordinate'' primitives \(X_i^\alpha\) themselves do \emph{not} belong to \(\Acal^\alpha(U)\) for \(d\ge2\), as observed in \eqref{rem:Xi-membership}. The common anchored space contains instead products and combinations that vanish on every coordinate hyperplane, such as \(\prod_{k=1}^d X_k^\alpha\). This is consistent with the fact that the natural test objects for the coordinate fractional derivations are anchored simultaneously in every variable.
\end{remark}

\begin{proposition}\label{prop:tangent-basis-new}
The family
\begin{equation}
\{e_1^\alpha(p),\dots,e_d^\alpha(p)\}
\end{equation}
is a basis of \(T_p^\alpha U\). In particular,
\begin{equation}
\dim T_p^\alpha U=d.
\end{equation}
\end{proposition}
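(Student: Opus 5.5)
The proof is pure linear algebra on the direct sum, combined with the representation theorem. By Theorem~\ref{thm:representation-new}, each summand \(\Dcal_i^\alpha(p)\) is a one-dimensional real vector space spanned by \((\partial_{C,i}^\alpha)|_p\). The one genuine check is that this spanning element is nonzero, so that the dimension really is one rather than zero. For this, evaluate it on \(X_i^\alpha=I_i^\alpha 1\in\Acal_i^\alpha(U)\): by Definition~\ref{def:anchored-caputo-new} and the uniqueness in Lemma~\ref{lem:Ai-inversion}, \(\partial_{C,i}^\alpha X_i^\alpha=1\). Hence \((\partial_{C,i}^\alpha)|_p(X_i^\alpha)=1\neq0\), equivalently \(\lambda_i\bigl((\partial_{C,i}^\alpha)|_p\bigr)=1\). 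This holds for every \(p\in U\), including boundary points, because no test function vanishing on other hyperplanes is needed.

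Next I use the standard fact that a finite direct sum \(\bigoplus_{i=1}^d V_i\) of vector spaces \(V_i\) with bases \(\{b_i\}\) has as a basis the family of elements \(\iota_i(b_i)\), each of which has \(b_i\) in the \(i\)-th slot and zeros elsewhere. Concretely, I argue spanning and independence directly.

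\textbf{Spanning.} An arbitrary element of \(T_p^\alpha U\) is a tuple \((D_1,\dots,D_d)\) with \(D_i\in\Dcal_i^\alpha(p)\). By the theorem, \(D_i=\lambda_i(D_i)(\partial_{C,i}^\alpha)|_p\), so the tuple equals \(\sum_i\lambda_i(D_i)\,e_i^\alpha(p)\).

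\textbf{Independence.} Suppose \(\sum_i c_i e_i^\alpha(p)=0\). Comparing \(i\)-th components gives \(c_i(\partial_{C,i}^\alpha)|_p=0\) as a functional on \(\Acal_i^\alpha(U)\). Evaluating at \(X_i^\alpha\) yields \(c_i=0\).

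These two steps give the basis, and hence \(\dim T_p^\alpha U=d\). There is no real obstacle; the only subtle point is that independence is argued componentwise in the abstract direct sum. It therefore does not require the faithfulness of Proposition~\ref{prop:tangent-action-common}, and so it needs no interior assumption on \(p\), consistent with Remark~\ref{rem:boundary-base-points}.
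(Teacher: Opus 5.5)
Your proof is correct and follows the paper's route: Theorem~\ref{thm:representation-new} gives each \(\Dcal_i^\alpha(p)\) as one-dimensional with generator \((\partial_{C,i}^\alpha)|_p\), and the basis of the direct sum then follows componentwise. Your explicit check that \((\partial_{C,i}^\alpha)|_p(X_i^\alpha)=1\neq0\) for every \(p\in U\) spells out a detail the paper leaves implicit in that theorem (it is the case \(h\equiv1\) of its proof).
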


\begin{proof}
By Theorem~\ref{thm:representation-new}, each \(\Dcal_i^\alpha(p)\) is one-dimensional and generated by \((\partial_{C,i}^\alpha)|_p\). Since \(T_p^\alpha U\) is the direct sum of these \(d\) one-dimensional spaces, the vectors \(e_i^\alpha(p)\) form a basis.
\end{proof}

Let \((T_p^\alpha U)^*\) denote the dual space.

\begin{definition}[fractional coordinate one-forms]\label{def:dual-basis-new}
For each \(j\in\{1,\dots,d\}\), define
\begin{equation}
d^\alpha X_j(p)\in (T_p^\alpha U)^*
\end{equation}
by the condition
\begin{equation}
d^\alpha X_j(p)\bigl(e_i^\alpha(p)\bigr)=\delta_{ij}
\qquad(i=1,\dots,d).
\end{equation}
\end{definition}

\begin{remark}
The notation \(d^\alpha X_j(p)\) refers here to the pointwise dual basis at \(p\). In the next section we use the same symbols \(d^\alpha X_j\) again as formal generators of a global exterior algebra. This notation is motivated by the pointwise dual basis, but the global exterior calculus is algebraic rather than bundle-theoretic.
\end{remark}

\section{Fractional differential forms}

At this stage it is essential to distinguish the local and global levels of the construction. In Sections~2--4, the objects \(T_p^\alpha U\) and \((T_p^\alpha U)^*\) are pointwise structures obtained from the coordinatewise anchored spaces \(\Acal_i^\alpha(U)\). In the present section, by contrast, we do \emph{not} construct a bundle of fractional differential forms over \(U\). Instead, we introduce a formal graded exterior algebra over a coefficient ring on which the relevant Caputo operators act transparently.

More precisely, the symbols \(d^\alpha X_1,\dots,d^\alpha X_d\) that appear below are formal degree-one generators modeled on the pointwise dual basis introduced in Definition~\ref{def:dual-basis-new}. The resulting algebraic exterior calculus is therefore guided by the local tangent--cotangent picture established earlier, but it is not claimed to be an intrinsic de~Rham-type theory on \(U\).

\subsection{The polynomial Caputo operator on fractional monomials}
\label{subsec:caputo-extension}

The local operator \(\partial_{C,i}^{\alpha}\) introduced in
Definition~\ref{def:anchored-caputo-new} is an inverse of \(I_i^\alpha\) on the
anchored range \(\Acal_i^\alpha(U)\). In the formal exterior complex below we
also need an operator on the polynomial algebra generated by the fractional
coordinate primitives. This operator agrees with the classical Caputo power
formula on monomials for which the Caputo integral is meaningful, but it is
defined algebraically on the whole polynomial coefficient ring. To avoid
confusing it with the anchored inverse, we denote it by \(\mathsf D_i^\alpha\).

For each \(i\in\{1,\dots,d\}\), recall
\begin{equation}
X_i^\alpha(x):=\frac{x_i^\alpha}{\Gamma(1+\alpha)}.
\end{equation}
Set
\begin{equation}
\mathscr P_\alpha(U):=\R[X_1^\alpha,\dots,X_d^\alpha].
\end{equation}

\begin{lemma}[algebraic independence of the fractional coordinates]
\label{lem:fractional-coordinates-independent}
The functions \(X_1^\alpha,\dots,X_d^\alpha\) are algebraically independent on \(U\). Equivalently, if a polynomial
\(P\in\R[y_1,\dots,y_d]\) satisfies
\begin{equation}
P(X_1^\alpha(x),\dots,X_d^\alpha(x))=0
\qquad\text{for every }x\in U,
\end{equation}
then \(P=0\).
\end{lemma}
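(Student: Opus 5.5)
The plan is to reduce the statement to the classical fact that a real polynomial vanishing on a set with nonempty interior in \(\R^d\) is the zero polynomial. Consider the map
\begin{equation*}
\Phi:U\to\R^d,\qquad \Phi(x):=\bigl(X_1^\alpha(x),\dots,X_d^\alpha(x)\bigr)=\Bigl(\tfrac{x_1^\alpha}{\Gamma(1+\alpha)},\dots,\tfrac{x_d^\alpha}{\Gamma(1+\alpha)}\Bigr).
\end{equation*}
It acts coordinatewise, and each component \(t\mapsto t^\alpha/\Gamma(1+\alpha)\) is a continuous, strictly increasing bijection of \([0,b_i]\) onto \([0,\beta_i]\), where \(\beta_i:=b_i^\alpha/\Gamma(1+\alpha)>0\). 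Hence \(\Phi(U)=\prod_{i=1}^d[0,\beta_i]\), a nondegenerate closed box. The hypothesis \(P(X_1^\alpha(x),\dots,X_d^\alpha(x))=0\) for all \(x\in U\) therefore says exactly that \(P\) vanishes identically on this box.

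It remains to show that a polynomial \(P\in\R[y_1,\dots,y_d]\) vanishing on \(\prod_i[0,\beta_i]\) is zero. I will argue by induction on \(d\). For \(d=1\), a nonzero one-variable polynomial has only finitely many roots, while \([0,\beta_1]\) is infinite. For the inductive step, write
\begin{equation*}
P=\sum_{k=0}^{n}P_k(y_1,\dots,y_{d-1})\,y_d^k .
\end{equation*}
Fix \((y_1,\dots,y_{d-1})\in\prod_{i<d}[0,\beta_i]\). The one-variable polynomial in \(y_d\) then vanishes on \([0,\beta_d]\), so all its coefficients \(P_k(y_1,\dots,y_{d-1})\) are zero. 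Each \(P_k\) therefore vanishes on the \((d-1)\)-dimensional box, and the induction hypothesis gives \(P_k=0\), hence \(P=0\).

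The equivalence asserted in the lemma is simply the definition of algebraic independence of the functions \(X_i^\alpha\) in the ring of real functions on \(U\). The substitution homomorphism \(\R[y_1,\dots,y_d]\to C(U)\), \(y_i\mapsto X_i^\alpha\), is injective precisely when the displayed implication holds. Consequently \(\mathscr P_\alpha(U)\) is isomorphic to the polynomial ring, and monomials in the \(X_i^\alpha\) form a basis.

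There is no real obstacle here. The only point needing care is that \(\Phi\) maps onto a box with nonempty interior rather than onto a degenerate set. This uses \(b_i>0\) and the monotonicity of \(t\mapsto t^\alpha\) on \([0,\infty)\) for \(\alpha\in(0,1)\).
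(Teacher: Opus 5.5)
Your proof is correct and follows the paper's argument: the coordinatewise map sends $U$ onto the nondegenerate box $\prod_i[0,b_i^\alpha/\Gamma(1+\alpha)]$, so $P$ vanishes on a set with nonempty interior and must be the zero polynomial. The only difference is that you prove this last fact by induction on $d$, whereas the paper cites it. That is fine, but note that your name $\Phi$ for the coordinate map clashes with the chain isomorphism $\Phi$ used later in the paper.
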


\begin{proof}
The map
\begin{equation}
[0,b_1]\times\cdots\times[0,b_d]\ni x
\longmapsto
\left(\frac{x_1^\alpha}{\Gamma(1+\alpha)},\dots,
      \frac{x_d^\alpha}{\Gamma(1+\alpha)}\right)
\end{equation}
has image
\begin{equation}
\prod_{i=1}^d\left[0,\frac{b_i^\alpha}{\Gamma(1+\alpha)}\right],
\end{equation}
which contains a nonempty open rectangle in \(\R^d\). Hence, if
\(P(X_1^\alpha,\,\dots,X_d^\alpha)\) vanishes on \(U\), then \(P\) vanishes on a nonempty open subset of \(\R^d\). A real polynomial that vanishes on a nonempty open subset is the zero polynomial. Thus \(P=0\).
\end{proof}

{\begin{remark}[why the coefficient algebra is polynomial in \(X_i^\alpha\)]
\label{rem:why-polynomial-X-alpha}
The restriction to
\begin{equation}
\mathscr P_\alpha(U)=\R[X_1^\alpha,\dots,X_d^\alpha]
\end{equation}
is deliberate. On this coefficient algebra the polynomial Caputo operators lower
one exponent at a time, preserve the coefficient class, and commute in distinct
coordinate directions. These three properties are exactly what is needed for
\(d^\alpha\) to be well defined and nilpotent. For a general nonlinear function
of the variables \(X_i^\alpha\), the Caputo chain rule is not a finite
first-order rule of the classical type, and substantially more complicated
memory terms appear. Consequently, enlarging the coefficient class is not a
formal matter: one would have to specify a function space on which the relevant
fractional chain rules, mixed derivatives, and closure properties are available.
The present polynomial class should therefore be read as a controlled algebraic
model, not as a maximal domain for fractional forms.
\end{remark}}

\begin{definition}[polynomial Caputo operator]
\label{def:poly-caputo-operator}
By Lemma~\ref{lem:fractional-coordinates-independent}, each polynomial coefficient has a unique expression as a finite linear combination of monomials in \(X_1^\alpha,\dots,X_d^\alpha\). For a monomial
\begin{equation}
M(x)=\prod_{k=1}^d (X_k^\alpha(x))^{m_k},
\qquad m_k\in\mathbb Z_{\ge0},
\end{equation}
define
\begin{equation}
\mathsf D_i^\alpha M
:=
\begin{cases}
\dfrac{\Gamma(m_i\alpha+1)}
{\Gamma((m_i-1)\alpha+1)\Gamma(1+\alpha)}
(X_i^\alpha)^{m_i-1}\displaystyle\prod_{j\neq i}(X_j^\alpha)^{m_j},
& m_i\ge1,\\[1.1em]
0, & m_i=0.
\end{cases}
\end{equation}
Extend \(\mathsf D_i^\alpha\) to all of \(\mathscr P_\alpha(U)\) by
\(\R\)-linearity.
\end{definition}

For \(m_i\ge1\), the formula in Definition~\ref{def:poly-caputo-operator}
coincides with the absolutely convergent Caputo integral obtained by
differentiating \(x_i^{m_i\alpha}\) and using the Beta function identity. Thus
\(\mathsf D_i^\alpha\) is compatible with the usual Caputo power formula on
monomials. However, \(\mathsf D_i^\alpha\) is not, on its full domain, the same
object as the anchored inverse \(\partial_{C,i}^\alpha\), because
\(\mathscr P_\alpha(U)\) contains coefficients that need not lie in
\(\Acal_i^\alpha(U)\). On the intersection of their domains where the relevant
Caputo integral is admissible and the function is \(i\)-anchored, the two
operators agree by Proposition~\ref{prop:anchored-vs-classical-new}.

\begin{lemma}[stability and mixed commutation on \(\mathscr P_\alpha\)]
\label{lem:poly-stability-commutation}
For every \(i\),
\begin{equation}
\mathsf D_i^\alpha(\mathscr P_\alpha(U))\subset \mathscr P_\alpha(U).
\end{equation}
Moreover, for all \(i,j\),
\begin{equation}
\mathsf D_i^\alpha\mathsf D_j^\alpha
=
\mathsf D_j^\alpha\mathsf D_i^\alpha
\qquad\text{on }\mathscr P_\alpha(U).
\end{equation}
\end{lemma}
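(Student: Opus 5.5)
By linearity it suffices to work on monomials. Lemma~\ref{lem:fractional-coordinates-independent} makes the monomials $M=\prod_k (X_k^\alpha)^{m_k}$ a basis of $\mathscr P_\alpha(U)$, so $\mathsf D_i^\alpha$ is a well-defined linear map on a vector space with that basis. The plan is to verify both claims on this basis and then extend by linearity.

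\emph{Stability.} Write the coefficient in Definition~\ref{def:poly-caputo-operator} as
\[
\gamma(m):=\frac{\Gamma(m\alpha+1)}{\Gamma((m-1)\alpha+1)\Gamma(1+\alpha)},
\qquad m\ge1,
\]
and set $\gamma(0):=0$. Then every case of the definition is captured by the single formula $\mathsf D_i^\alpha M=\gamma(m_i)\,M_{i}^{-}$. Here $M_i^{-}$ denotes the monomial with exponent vector $m-e_i$ when $m_i\ge1$; when $m_i=0$ the factor $\gamma(0)=0$ kills the term, so the meaning of $M_i^-$ does not matter. Each $\gamma(m)$ is a finite real number, because the Gamma arguments $(m-1)\alpha+1\ge1$ and $m\alpha+1$ are positive. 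Hence $\mathsf D_i^\alpha M$ is a real multiple of a monomial in $\mathscr P_\alpha(U)$. Linearity then gives $\mathsf D_i^\alpha(\mathscr P_\alpha(U))\subset\mathscr P_\alpha(U)$.

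\emph{Commutation.} The case $i=j$ is trivial, so fix $i\neq j$ and a monomial with exponent vector $m$. Applying the single formula twice gives
\[
\mathsf D_i^\alpha\mathsf D_j^\alpha M=\gamma(m_j)\,\gamma(m_i)\,X^{m-e_i-e_j}.
\]
This uses the fact that $\mathsf D_j^\alpha$ leaves the $i$-th exponent unchanged, since $i\neq j$. By the same computation with the roles swapped,
\[
\mathsf D_j^\alpha\mathsf D_i^\alpha M=\gamma(m_i)\,\gamma(m_j)\,X^{m-e_j-e_i}.
\]
The two expressions are equal because the scalars commute and $m-e_i-e_j=m-e_j-e_i$. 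If $m_i=0$ or $m_j=0$, both sides vanish, since the factor $\gamma(0)=0$ appears in each. Both composites are linear, so agreement on the monomial basis gives equality on all of $\mathscr P_\alpha(U)$.

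The only point needing care is the zero-exponent case. It must be handled so that no negative exponent is ever formed. The convention $\gamma(0)=0$, together with the observation that the $j$-th operator never alters the $i$-th exponent, settles this.
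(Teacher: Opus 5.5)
Your proposal is correct and follows essentially the same argument as the paper: reduce to the monomial basis, note that each $\mathsf D_i^\alpha$ lowers only the $i$-th exponent with a scalar depending only on $m_i$, and check that both composites vanish when $m_i=0$ or $m_j=0$. Your convention $\gamma(0)=0$ is just a compact way of writing the paper's case distinction.
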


\begin{proof}
Stability follows directly from Definition~\ref{def:poly-caputo-operator}.
For commutation, it suffices by linearity to check monomials
\(M=\prod_k(X_k^\alpha)^{m_k}\). If \(i=j\), the assertion is tautological. If
\(i\neq j\), applying \(\mathsf D_i^\alpha\) lowers only the \(i\)-th exponent
and multiplies by a coefficient depending only on \(m_i\); applying
\(\mathsf D_j^\alpha\) lowers only the \(j\)-th exponent and multiplies by a
coefficient depending only on \(m_j\). Hence the two possible orders produce the
same monomial and the same product of scalar coefficients. The cases
\(m_i=0\) or \(m_j=0\) are included, since then both corresponding compositions
vanish. Thus the operators commute on monomials and therefore on all of
\(\mathscr P_\alpha(U)\).
\end{proof}

\subsection{Formal differential forms}

In this subsection the symbols \(d^\alpha X_j\) are used as formal degree-one generators. They are modeled on, but distinct from, the pointwise dual covectors \(d^\alpha X_j(p)\) of Definition~\ref{def:dual-basis-new}.

Let \(d^\alpha X_1,\dots,d^\alpha X_d\) be formal degree-one generators satisfying the usual skew-commutation relations
\begin{equation}
d^\alpha X_i\wedge d^\alpha X_j = - d^\alpha X_j\wedge d^\alpha X_i,
\qquad
d^\alpha X_i\wedge d^\alpha X_i=0.
\end{equation}
For a multi-index \(I=(i_1<\cdots<i_p)\), write
\begin{equation}
d^\alpha X_I:=d^\alpha X_{i_1}\wedge\cdots\wedge d^\alpha X_{i_p}.
\end{equation}

For each \(p\ge0\), define
\begin{equation}
\Omega_\alpha^p(U;\mathscr P_\alpha)
:=
\bigoplus_{|I|=p}\mathscr P_\alpha(U)\,d^\alpha X_I.
\end{equation}
The graded algebra
\begin{equation}
\Omega_\alpha^\bullet(U;\mathscr P_\alpha)
:=
\bigoplus_{p=0}^d \Omega_\alpha^p(U;\mathscr P_\alpha)
\end{equation}
is the formal fractional exterior algebra used below.

\begin{definition}[fractional exterior differential]\label{def:dalpha-new}
Let
\begin{equation}
\omega=\sum_{|I|=p} a_I\,d^\alpha X_I
\in \Omega_\alpha^p(U;\mathscr P_\alpha).
\end{equation}
Define
\begin{equation}
d^\alpha\omega
:=
\sum_{|I|=p}\sum_{i=1}^d
(\mathsf D_i^\alpha a_I)\,d^\alpha X_i\wedge d^\alpha X_I.
\end{equation}
\end{definition}

By Lemma~\ref{lem:poly-stability-commutation}, the operator
\begin{equation}
d^\alpha:\Omega_\alpha^p(U;\mathscr P_\alpha)\to \Omega_\alpha^{p+1}(U;\mathscr P_\alpha)
\end{equation}
is well defined.

{\begin{definition}[formal fractional interior product]
\label{def:fractional-interior-product}
Let
\begin{equation}
V=\sum_{i=1}^d V^i\,\mathsf e_i^\alpha,
\qquad V^i\in\mathscr P_\alpha(U),
\end{equation}
be a formal fractional vector field, where
\(\mathsf e_i^\alpha\) denotes the formal vector dual to \(d^\alpha X_i\).
The formal interior product
\begin{equation}
\iota_V^\alpha:
\Omega_\alpha^p(U;\mathscr P_\alpha)
\longrightarrow
\Omega_\alpha^{p-1}(U;\mathscr P_\alpha)
\end{equation}
is defined on monomial forms by
\begin{equation}
\iota_V^\alpha
\left(
a\,d^\alpha X_{i_1}\wedge\cdots\wedge d^\alpha X_{i_p}
\right)
:=
\sum_{r=1}^p
(-1)^{r-1}
a\,V^{i_r}\,
d^\alpha X_{i_1}\wedge\cdots
\widehat{d^\alpha X_{i_r}}
\cdots\wedge d^\alpha X_{i_p},
\end{equation}
and extended \(\mathscr P_\alpha(U)\)-linearly.
\end{definition}

\begin{remark}
The operator \(\iota_V^\alpha\) is the algebraic contraction determined by the
chosen fractional coframe. It is fractional only in the sense that it is attached
to the formal dual pair
\((\mathsf e_i^\alpha,d^\alpha X_i)\). It is not a new nonlocal differential
operator and does not involve the Caputo rule. This is consistent with the
coframe-attached nature of the present exterior calculus.
\end{remark}

\begin{remark}[fractional integration of formal top forms]
\label{rem:fractional-integration-nonadditivity}
One can attach an iterated left Riemann--Liouville integral to a formal top form
on the anchored rectangle. For example, for
\begin{equation}
\omega=a\,d^\alpha X_1\wedge\cdots\wedge d^\alpha X_d,
\qquad a\in\mathscr P_\alpha(U),
\end{equation}
a natural anchored integral is
\begin{equation}
\int_U^\alpha \omega
:=
(I_1^\alpha\cdots I_d^\alpha a)(b_1,\dots,b_d).
\end{equation}
This convention is compatible with the anchored choice of lower terminal
\(0\) in each coordinate. However, it should not be confused with an additive
measure-theoretic integration of differential forms. Already in one dimension,
for \(0<c<b\) and \(f\equiv1\),
\begin{equation}
\frac{1}{\Gamma(\alpha)}
\int_0^b (b-\xi)^{\alpha-1}\,d\xi
=
\frac{b^\alpha}{\Gamma(1+\alpha)}
\end{equation}
is not equal in general to
\begin{equation}
\frac{c^\alpha}{\Gamma(1+\alpha)}
+
\frac{(b-c)^\alpha}{\Gamma(1+\alpha)}.
\end{equation}
Thus fractional integration with a fixed memory terminal is not additive under
subdivision of the interval. For this reason the present paper uses fractional
integration only as an anchored analytic background for the operators
\(I_i^\alpha\), and does not claim a full Stokes theory for integrated
fractional forms.
\end{remark}}

{The ordinary wedge product on
\(\Omega_\alpha^\bullet(U;\mathscr P_\alpha)\) is the exterior product generated
by the skew-commuting symbols \(d^\alpha X_i\), together with the ordinary
multiplication of polynomial coefficients. With this product the space of forms
is a graded algebra, but \(d^\alpha\) is not a graded derivation. Thus the
present construction is algebraically complete as a nilpotent chain complex, but
not as an ordinary differential graded algebra. This is consistent with the
generalized Leibniz formulae for fractional derivatives, which involve
non-finite expansions rather than the two-term product rule; see, for example,
\cite[Section~15]{SamkoKilbasMarichev1993}. If desired, one may instead
transport the ordinary wedge product through the chain isomorphism \(\Phi\) of
Proposition~\ref{prop:dRham-reduction} by setting
\begin{equation}
\omega\wedge_\Phi\eta
:=
\Phi^{-1}\bigl(\Phi(\omega)\wedge\Phi(\eta)\bigr).
\end{equation}
With respect to this transported product, \(d^\alpha\) becomes a graded
derivation by construction. However, the product \(\wedge_\Phi\) is no longer
the ordinary coefficientwise wedge product, and we do not use it in the Maxwell
section.}

\begin{proposition}[failure of the ordinary graded Leibniz rule]
\label{prop:not-ordinary-dga}
For \(0<\alpha<1\), the operator \(d^\alpha\) is not a graded derivation of the
ordinary wedge product on
\(\Omega_\alpha^\bullet(U;\mathscr P_\alpha)\). In particular,
\begin{equation}
d^\alpha(\omega\wedge\eta)
\neq
d^\alpha\omega\wedge\eta
+
(-1)^{\deg\omega}\omega\wedge d^\alpha\eta
\end{equation}
in general.
\end{proposition}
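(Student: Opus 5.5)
A single explicit counterexample in degree zero suffices. Take \(d\ge1\), fix the coordinate \(i=1\), and set \(\omega=\eta=X_1^\alpha\in\Omega_\alpha^0(U;\mathscr P_\alpha)\). For \(0\)-forms the graded Leibniz rule reads \(d^\alpha(fg)=(d^\alpha f)\,g+f\,d^\alpha g\). By Definition~\ref{def:dalpha-new}, both sides are multiples of the single generator \(d^\alpha X_1\): every other \(\mathsf D_j^\alpha\), \(j\neq1\), annihilates these monomials since their \(j\)-th exponent is zero. So it suffices to compare the \(d^\alpha X_1\)-coefficients, using only the monomial formula of Definition~\ref{def:poly-caputo-operator}.

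The left side is \(\mathsf D_1^\alpha\bigl((X_1^\alpha)^2\bigr)=c_\alpha X_1^\alpha\), with \(c_\alpha=\Gamma(2\alpha+1)/\Gamma(1+\alpha)^2\) exactly as in Proposition~\ref{prop:tangent-action-common}. On the right side, \(\mathsf D_1^\alpha X_1^\alpha=\Gamma(\alpha+1)/(\Gamma(1)\Gamma(1+\alpha))=1\). Hence the right side equals \(1\cdot X_1^\alpha+X_1^\alpha\cdot 1=2X_1^\alpha\).

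Equality would therefore force \(c_\alpha=2\) as an identity of polynomials. By Lemma~\ref{lem:fractional-coordinates-independent}, monomials are linearly independent, so comparing coefficients of \(X_1^\alpha\) is legitimate. It remains to show \(c_\alpha\neq2\) for \(\alpha\in(0,1)\). Strict log-convexity gives only \(c_\alpha>1\), which is not enough, so I would argue as follows. We have \(c_\alpha=1/\bigl((2\alpha+1)\,\mathrm{B}(\alpha+1,\alpha+1)\bigr)\), and equivalently \(c_\alpha=\binom{2\alpha}{\alpha}\) in the Gamma sense. Using the integral
\[
\mathrm{B}(\alpha+1,\alpha+1)=\int_0^1 t^\alpha(1-t)^\alpha\,dt,
\]
consider the function
\[
g(\alpha)=(2\alpha+1)\int_0^1 \bigl(t(1-t)\bigr)^\alpha\,dt .
\]
At the endpoints, \(g(0)=1\) and \(g(1)=3\cdot\tfrac16=\tfrac12\). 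The key fact is the identity \(\Gamma(2\alpha+1)/\Gamma(\alpha+1)^2=4^\alpha\,\Gamma(\alpha+\tfrac12)/\bigl(\sqrt\pi\,\Gamma(\alpha+1)\bigr)\), which follows from the duplication formula. Its logarithmic derivative is
\[
\log4+\psi(\alpha+\tfrac12)-\psi(\alpha+1),
\]
and this is positive because \(\psi(\alpha+1)-\psi(\alpha+\tfrac12)<\psi(1)-\psi(\tfrac12)=\log4\), by concavity of \(\psi\). Hence \(c_\alpha\) is strictly increasing on \([0,1]\), from \(c_0=1\) to \(c_1=2\), and so \(c_\alpha<2\) on \((0,1)\).

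The main obstacle is exactly this monotonicity or strict inequality. If I preferred to avoid it, I would use a one-parameter family instead: compare \(\omega=(X_1^\alpha)^m\) with \(\eta=X_1^\alpha\). Leibniz would force
\[
\frac{\Gamma((m+1)\alpha+1)}{\Gamma(m\alpha+1)\Gamma(1+\alpha)}=\frac{\Gamma(m\alpha+1)}{\Gamma((m-1)\alpha+1)\Gamma(1+\alpha)}+1
\]
for all \(m\ge1\). As \(m\to\infty\), Stirling gives both Gamma ratios asymptotic to \((m\alpha)^\alpha/\Gamma(1+\alpha)\), with a difference of order \(m^{\alpha-1}\to0\), whereas Leibniz demands a difference of exactly \(1\). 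This yields a contradiction for every \(\alpha\in(0,1)\).

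Finally, I would remark that the counterexample extends to positive degrees by wedging with \(d^\alpha X_I\) for \(1\notin I\), since \(d^\alpha\) of a constant-coefficient form vanishes.
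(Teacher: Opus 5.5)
Your proposal is correct and follows the paper's proof: the same counterexample \(\omega=\eta=X_i^\alpha\), the same comparison of \(c_\alpha X_i^\alpha\) with \(2X_i^\alpha\), and the same conclusion that \(c_\alpha\) is strictly increasing with \(c_1=2\). The only difference is that the paper gets monotonicity more directly, from \(\frac{d}{d\alpha}\log c_\alpha=2\psi(2\alpha+1)-2\psi(\alpha+1)>0\) because \(\psi\) is strictly increasing; your route through the duplication formula and concavity of \(\psi\) is valid but unnecessary, and so are the Beta-integral and Stirling asides.
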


\begin{proof}
It is enough to consider \(0\)-forms. Let \(a=b=X_i^\alpha\). Then
\begin{equation}
ab=(X_i^\alpha)^2.
\end{equation}
By Definition~\ref{def:poly-caputo-operator},
\begin{equation}
d^\alpha\bigl((X_i^\alpha)^2\bigr)
=
\frac{\Gamma(2\alpha+1)}{\Gamma(1+\alpha)^2}
X_i^\alpha\,d^\alpha X_i.
\end{equation}
If \(d^\alpha\) satisfied the ordinary Leibniz rule, we would instead have
\begin{equation}
d^\alpha(ab)
=
a\,d^\alpha b+b\,d^\alpha a
=
2X_i^\alpha\,d^\alpha X_i.
\end{equation}
The two coefficients agree only in the integer-order limit \(\alpha=1\). Indeed,
for
\begin{equation}
c_\alpha:=\frac{\Gamma(2\alpha+1)}{\Gamma(1+\alpha)^2}
\end{equation}
one has \(c_1=2\), and \(c_\alpha<2\) for \(0<\alpha<1\). To see this, note that
\begin{equation}
\frac{d}{d\alpha}\log c_\alpha
=
2\psi(2\alpha+1)-2\psi(\alpha+1)>0,
\end{equation}
because the digamma function \(\psi\) is strictly increasing on \((0,\infty)\).
Thus \(c_\alpha\) is strictly increasing on \((0,1]\), and so \(c_\alpha\neq2\)
for \(0<\alpha<1\). Hence the ordinary graded Leibniz rule fails.
\end{proof}

\begin{example}[A worked computation in dimension \(2\)]\label{ex:worked}
Let \(U=[0,b_1]\times[0,b_2]\subset\R^2\), and consider the \(1\)-form
\begin{equation}
\omega
=
X_1^\alpha X_2^\alpha\, d^\alpha X_1
+
(X_1^\alpha)^2\, d^\alpha X_2
\in \Omega_\alpha^1(U;\mathscr P_\alpha).
\end{equation}
We compute \(d^\alpha\omega\) explicitly.

Write
\begin{equation}
a_1:=X_1^\alpha X_2^\alpha,
\qquad
a_2:=(X_1^\alpha)^2.
\end{equation}
Then, by Definition~\ref{def:dalpha-new},
\begin{equation}
d^\alpha\omega
=
\sum_{i=1}^2 (\mathsf D_i^\alpha a_1)\, d^\alpha X_i\wedge d^\alpha X_1
+
\sum_{i=1}^2 (\mathsf D_i^\alpha a_2)\, d^\alpha X_i\wedge d^\alpha X_2.
\end{equation}
Because \(d^\alpha X_1\wedge d^\alpha X_1=0\) and
\(d^\alpha X_2\wedge d^\alpha X_2=0\), this reduces to
\begin{equation}
d^\alpha\omega
=
(\mathsf D_1^\alpha a_2)\, d^\alpha X_1\wedge d^\alpha X_2
+
(\mathsf D_2^\alpha a_1)\, d^\alpha X_2\wedge d^\alpha X_1.
\end{equation}
Using \(d^\alpha X_2\wedge d^\alpha X_1=-\,d^\alpha X_1\wedge d^\alpha X_2\),
we obtain
\begin{equation}
d^\alpha\omega
=
\bigl(\mathsf D_1^\alpha a_2-\mathsf D_2^\alpha a_1\bigr)\,
d^\alpha X_1\wedge d^\alpha X_2.
\end{equation}

By the monomial rule in Definition~\ref{def:poly-caputo-operator}, the
\(x_2\)-exponent of \(X_1^\alpha X_2^\alpha\) is one. Hence
\begin{equation}
\mathsf D_2^\alpha a_1
=
\mathsf D_2^\alpha(X_1^\alpha X_2^\alpha)
=
X_1^\alpha.
\end{equation}
Similarly,
\begin{equation}
\mathsf D_1^\alpha a_2
=
\mathsf D_1^\alpha (X_1^\alpha)^2
=
\frac{\Gamma(2\alpha+1)}{\Gamma(1+\alpha)^2}\,X_1^\alpha.
\end{equation}
Therefore
\begin{equation}
d^\alpha\omega
=
\left(
\frac{\Gamma(2\alpha+1)}{\Gamma(1+\alpha)^2}-1
\right)
X_1^\alpha\,
d^\alpha X_1\wedge d^\alpha X_2.
\end{equation}

This example illustrates two basic features of the formal fractional exterior calculus: first, the operator \(d^\alpha\) differentiates only the coefficients by the polynomial rule \(\mathsf D_i^\alpha\); second, the antisymmetry of the wedge product reorganizes the result exactly as in the classical coordinate formula for the exterior derivative.
\end{example}

\section{Nilpotency of the fractional exterior differential}

In a general fractional setting, the identity \((d^\alpha)^2=0\) cannot be taken for granted, because mixed Caputo derivatives need not commute on arbitrary coefficient classes. The role of the polynomial algebra \(\mathscr P_\alpha(U)\) is precisely to provide a concrete coefficient class on which this commutation is automatic.

\begin{theorem}\label{thm:nilpotency-new}
For every
\begin{equation}
\omega\in \Omega_\alpha^p(U;\mathscr P_\alpha),
\end{equation}
one has
\begin{equation}
(d^\alpha)^2\omega=0.
\end{equation}
\end{theorem}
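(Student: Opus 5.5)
By linearity of \(d^\alpha\), it suffices to treat a single monomial form \(\omega=a\,d^\alpha X_I\) with \(a\in\mathscr P_\alpha(U)\) and \(|I|=p\). Linearity holds because Definition~\ref{def:dalpha-new} is built from the \(\R\)-linear operators \(\mathsf D_i^\alpha\) and the bilinear wedge product.

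Applying Definition~\ref{def:dalpha-new} once gives
\begin{equation}
d^\alpha\omega=\sum_{j=1}^d(\mathsf D_j^\alpha a)\,d^\alpha X_j\wedge d^\alpha X_I .
\end{equation}
This is a legitimate element of \(\Omega_\alpha^{p+1}(U;\mathscr P_\alpha)\): each coefficient \(\mathsf D_j^\alpha a\) lies in \(\mathscr P_\alpha(U)\) by the stability part of Lemma~\ref{lem:poly-stability-commutation}. Terms with \(j\in I\) vanish. For \(j\notin I\), the product \(d^\alpha X_j\wedge d^\alpha X_I\) equals \(\pm d^\alpha X_{J}\), where \(J\) is the increasing reordering of \(I\cup\{j\}\).

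A point to check is that \(d^\alpha\) applied to \(b\,d^\alpha X_j\wedge d^\alpha X_I\), written in this non-normalized form, still equals \(\sum_i(\mathsf D_i^\alpha b)\,d^\alpha X_i\wedge d^\alpha X_j\wedge d^\alpha X_I\). This holds because the reordering sign is a constant and \(\mathsf D_i^\alpha\) is linear, so the defining formula is compatible with the skew relations.

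Applying \(d^\alpha\) a second time then yields
\begin{equation}
(d^\alpha)^2\omega=\sum_{i,j=1}^d(\mathsf D_i^\alpha\mathsf D_j^\alpha a)\,d^\alpha X_i\wedge d^\alpha X_j\wedge d^\alpha X_I .
\end{equation}
Split the double sum into diagonal and off-diagonal parts. The diagonal terms \(i=j\) vanish because \(d^\alpha X_i\wedge d^\alpha X_i=0\). For the off-diagonal part, pair the term \((i,j)\) with \((j,i)\), where \(i<j\). Their sum is
\begin{equation}
\bigl(\mathsf D_i^\alpha\mathsf D_j^\alpha a-\mathsf D_j^\alpha\mathsf D_i^\alpha a\bigr)\,d^\alpha X_i\wedge d^\alpha X_j\wedge d^\alpha X_I,
\end{equation}
using \(d^\alpha X_j\wedge d^\alpha X_i=-d^\alpha X_i\wedge d^\alpha X_j\). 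This vanishes by the mixed commutation \(\mathsf D_i^\alpha\mathsf D_j^\alpha=\mathsf D_j^\alpha\mathsf D_i^\alpha\) on \(\mathscr P_\alpha(U)\) from Lemma~\ref{lem:poly-stability-commutation}.

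The only real obstacle is the commutation, and it is already supplied by Lemma~\ref{lem:poly-stability-commutation}. This is precisely where the restriction to polynomial coefficients enters. Everything else is the classical antisymmetry argument, with some care about the normalization of multi-indices.
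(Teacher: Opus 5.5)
Your proof is correct and matches the paper's argument: you expand $d^\alpha$ twice, drop the diagonal terms using $d^\alpha X_i\wedge d^\alpha X_i=0$, and cancel each $(i,j)$ term against its $(j,i)$ partner using the mixed commutation of Lemma~\ref{lem:poly-stability-commutation}. Your extra check that $d^\alpha$ respects non-normalized wedge orderings (the constant reordering sign passes through the linear $\mathsf D_i^\alpha$) makes explicit a step the paper uses without comment.
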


\begin{proof}
Write
\begin{equation}
\omega=\sum_{|I|=p}a_I\,d^\alpha X_I,
\qquad a_I\in\mathscr P_\alpha(U).
\end{equation}
Using Definition~\ref{def:dalpha-new} twice, we obtain
\begin{equation}
(d^\alpha)^2\omega
=
\sum_{|I|=p}\sum_{i,j=1}^d
(\mathsf D_j^\alpha\mathsf D_i^\alpha a_I)\,
d^\alpha X_j\wedge d^\alpha X_i\wedge d^\alpha X_I.
\end{equation}
The terms with \(i=j\) vanish because
\(d^\alpha X_i\wedge d^\alpha X_i=0\). For \(i\neq j\),
Lemma~\ref{lem:poly-stability-commutation} gives
\begin{equation}
\mathsf D_j^\alpha\mathsf D_i^\alpha a_I
=
\mathsf D_i^\alpha\mathsf D_j^\alpha a_I.
\end{equation}
The \((i,j)\)-term and the \((j,i)\)-term therefore have the same coefficient
and opposite exterior basis factor, since
\begin{equation}
d^\alpha X_j\wedge d^\alpha X_i
=
-\,d^\alpha X_i\wedge d^\alpha X_j.
\end{equation}
They cancel pairwise. Hence \((d^\alpha)^2\omega=0\).
\end{proof}

The following proposition delimits the role of the formal exterior
calculus precisely. The operator \(d^\alpha\) is genuinely distinct from the
ordinary exterior derivative on the algebraic polynomial complex \(\Omega_{\mathrm{poly}}^\bullet(\R^d_y)\) --- its monomial
coefficients are the Gamma ratios \(g(m)\) rather than the integers \(m\) ---
but the two are conjugate through the positive diagonal rescaling \(\Phi\), so
\(d^\alpha\) carries no cohomological information beyond the classical case. In
particular the Maxwell-type continuity law of Section~7, which follows from
\((d^\alpha)^2=0\) alone, does not detect the fractional order. The role of the
polynomial algebra \(\mathscr P_\alpha(U)\) is therefore to furnish a concrete,
transparently nilpotent coefficient class on which the local anchored structure
of Sections~2--4 is realized globally, not to produce new cohomology. The
genuinely fractional content of the paper resides in the local theory, see again \eqref{rem:faithful-all-alpha}.

\begin{proposition}[Algebraic reduction to the polynomial de~Rham complex]
\label{prop:dRham-reduction}
Let
\begin{equation}
\Omega_{\mathrm{poly}}^\bullet(\R^d_y)
:=
\R[y_1,\dots,y_d]\otimes
\Lambda^\bullet(dy_1,\dots,dy_d)
\end{equation}
be the ordinary polynomial de~Rham complex in the algebraically independent
variables \(y_1,\dots,y_d\), with differential \(d\). Define the graded
\(\R\)-linear map
\begin{equation}
\Phi:\Omega_\alpha^\bullet(U;\mathscr P_\alpha)
\longrightarrow
\Omega_{\mathrm{poly}}^\bullet(\R^d_y)
\end{equation}
on monomial basis elements by
\begin{equation}
\Phi\Bigl(
\prod_{k=1}^d (X_k^\alpha)^{m_k}\,d^\alpha X_I
\Bigr)
:=
\Bigl(\prod_{k=1}^d\phi(m_k)\Bigr)
y^{\mathbf m}\,dy_I,
\end{equation}
where
\begin{equation}
\phi(m):=
\frac{1}{m!}\,
\frac{\Gamma(m\alpha+1)}{\Gamma(1+\alpha)^m},
\qquad
\phi(0):=1.
\end{equation}
Then each \(\phi(m)>0\), so \(\Phi\) is a graded vector-space isomorphism, and
\begin{equation}
\Phi\circ d^\alpha=d\circ\Phi.
\end{equation}
\end{proposition}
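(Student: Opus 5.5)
The plan has three parts: check positivity and bijectivity of \(\Phi\), reduce the intertwining identity to a coefficient identity on monomial forms, and verify that identity with Gamma-function algebra.

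\emph{Positivity and bijectivity.} For \(m\ge1\) the quantity \(\Gamma(m\alpha+1)\) is positive because \(m\alpha+1>0\). The factors \(m!\) and \(\Gamma(1+\alpha)^m\) are also positive. Hence \(\phi(m)>0\) for every \(m\), and the product \(\prod_k\phi(m_k)\) is never zero.

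By Lemma~\ref{lem:fractional-coordinates-independent}, the monomial forms \(\prod_k(X_k^\alpha)^{m_k}\,d^\alpha X_I\) form a basis of \(\Omega_\alpha^\bullet(U;\mathscr P_\alpha)\). The forms \(y^{\mathbf m}\,dy_I\) form a basis of \(\Omega_{\mathrm{poly}}^\bullet\). So \(\Phi\) sends one basis bijectively onto the other, multiplying each basis element by a nonzero scalar. It is therefore a graded isomorphism, and it preserves degree because \(I\) is unchanged.

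\emph{Reduction to a coefficient identity.} Both \(\Phi\circ d^\alpha\) and \(d\circ\Phi\) are \(\R\)-linear, so it suffices to compare them on a single basis element \(\omega=M\,d^\alpha X_I\) with \(M=\prod_k(X_k^\alpha)^{m_k}\). Using Definition~\ref{def:dalpha-new} and Definition~\ref{def:poly-caputo-operator},
\[
\Phi(d^\alpha\omega)=\sum_{i:\,m_i\ge1} g(m_i)\Bigl(\prod_{k\ne i}\phi(m_k)\Bigr)\phi(m_i-1)\,y^{\mathbf m-e_i}\,dy_i\wedge dy_I,
\qquad
g(m):=\frac{\Gamma(m\alpha+1)}{\Gamma((m-1)\alpha+1)\Gamma(1+\alpha)}.
\]
On the other side, the ordinary exterior derivative gives
\[
d\Phi(\omega)=\Bigl(\prod_k\phi(m_k)\Bigr)\sum_{i:\,m_i\ge1} m_i\,y^{\mathbf m-e_i}\,dy_i\wedge dy_I.
\]
Two points need care here. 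Terms with \(i\in I\) vanish identically on both sides, and the terms with \(m_i=0\) are absent on both sides. Moreover, the wedge reordering \(dy_i\wedge dy_I\) mirrors \(d^\alpha X_i\wedge d^\alpha X_I\) exactly, because \(\Phi\) acts trivially on the exterior factor. After these observations, the claim reduces, term by term in \(i\), to the scalar identity
\[
g(m)\,\phi(m-1)=m\,\phi(m)\qquad(m\ge1).
\]

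\emph{Verifying the scalar identity.} Substituting the definitions, the left side is
\[
g(m)\,\phi(m-1)=\frac{\Gamma(m\alpha+1)}{\Gamma((m-1)\alpha+1)\Gamma(1+\alpha)}\cdot\frac{\Gamma((m-1)\alpha+1)}{(m-1)!\,\Gamma(1+\alpha)^{m-1}}=\frac{\Gamma(m\alpha+1)}{(m-1)!\,\Gamma(1+\alpha)^m}.
\]
This equals \(m\,\phi(m)\), since \(m\phi(m)=\Gamma(m\alpha+1)/\bigl((m-1)!\,\Gamma(1+\alpha)^m\bigr)\). The case \(m=1\) is covered by the convention \(\phi(0)=1\), which also agrees with the general formula at \(m=0\).

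I expect no real obstacle. The only steps needing care are the sign and ordering bookkeeping in the wedge factor and the consistent treatment of the \(m_i=0\) terms. The essential observation is that \(\phi\) is precisely the telescoping product of the ratios \(g(j)/j\) for \(j=1,\dots,m\), and this is what makes the scalar identity hold.
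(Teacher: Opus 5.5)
Your proposal is correct and follows the paper's proof essentially step for step. Positivity of the diagonal entries, together with the monomial basis from Lemma~\ref{lem:fractional-coordinates-independent}, gives the isomorphism, and the chain relation reduces to the scalar identity \(g(m)\phi(m-1)=m\phi(m)\). The only cosmetic difference is that the paper checks \(0\)-forms and then notes that both differentials wedge on the left with the fixed exterior factor, whereas you treat \(M\,d^\alpha X_I\) directly and make the reordering and \(i\in I\) bookkeeping explicit.
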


\begin{proof}
By Lemma~\ref{lem:fractional-coordinates-independent}, the displayed fractional monomials form a basis of the coefficient algebra \(\mathscr P_\alpha(U)\).

Since \(\Phi\) is diagonal on this basis with strictly positive diagonal entries,
it is a graded vector-space isomorphism. It remains to verify the chain relation.
It suffices to check a monomial \(0\)-form
\begin{equation}
M=\prod_k(X_k^\alpha)^{m_k}.
\end{equation}
By Definition~\ref{def:poly-caputo-operator},
\begin{equation}
d^\alpha M
=
\sum_{i:m_i\ge1}
g(m_i)
(X_i^\alpha)^{m_i-1}
\prod_{j\neq i}(X_j^\alpha)^{m_j}
\,d^\alpha X_i,
\end{equation}
where
\begin{equation}
g(m):=
\frac{\Gamma(m\alpha+1)}
{\Gamma((m-1)\alpha+1)\Gamma(1+\alpha)}.
\end{equation}
Hence
\begin{equation}
\Phi(d^\alpha M)
=
\sum_{i:m_i\ge1}
g(m_i)\phi(m_i-1)\prod_{j\neq i}\phi(m_j)\,
y^{\mathbf m-e_i}\,dy_i.
\end{equation}
On the other hand,
\begin{equation}
d(\Phi M)
=
\sum_{i:m_i\ge1}
m_i\prod_k\phi(m_k)\,
y^{\mathbf m-e_i}\,dy_i.
\end{equation}
The coefficients agree because
\begin{equation}
g(m)\phi(m-1)=m\phi(m),
\end{equation}
which follows immediately from the definition of \(\phi\). For a general
monomial form \(M\,d^\alpha X_I\), both differentials act only on the coefficient
\(M\) and wedge the resulting \(1\)-form on the left with the fixed exterior
basis element. Therefore the same computation proves
\(\Phi d^\alpha=d\Phi\) in every degree.
\end{proof}

\subsection{A fractional polynomial Poincar\'e homotopy}

The chain isomorphism of Proposition~\ref{prop:dRham-reduction} gives an
immediate cohomological consequence. For completeness, and also to make the
de~Rham nature of the formal complex explicit, we record the corresponding
fractional polynomial Poincar\'e homotopy. Homotopy operators for exterior differential systems are a classical tool, but
they also remain useful in recent geometric and mathematical-physics contexts;
for example, Kycia and \v{S}ilhan use the linear homotopy operator of the
Poincar\'e lemma to invert covariant exterior derivatives locally. The operator
$K_\alpha$ below is the corresponding diagonal-rescaled homotopy in the
anchored fractional polynomial complex \cite{KyciaSilhan2025}.

For \(m\in\mathbb Z_{\ge0}\), recall
\begin{equation}
\phi(m)=
\frac{1}{m!}\,
\frac{\Gamma(m\alpha+1)}{\Gamma(1+\alpha)^m},
\qquad
\phi(0)=1,
\end{equation}
and for a multi-index \(\mathbf m=(m_1,\dots,m_d)\) write
\begin{equation}
\phi(\mathbf m):=\prod_{j=1}^d\phi(m_j),
\qquad
(X^\alpha)^{\mathbf m}:=\prod_{j=1}^d (X_j^\alpha)^{m_j}.
\end{equation}

\begin{definition}[fractional polynomial homotopy operator]
\label{def:fractional-homotopy}
Let
\begin{equation}
\omega=(X^\alpha)^{\mathbf m}\,
d^\alpha X_{i_1}\wedge\cdots\wedge d^\alpha X_{i_p},
\qquad
 i_1<\cdots<i_p,
\end{equation}
be a monomial \(p\)-form. Define \(K_\alpha\omega=0\) if \(p=0\). If \(p\ge1\),
define
\begin{equation}
K_\alpha\omega
:=
\frac{1}{|\mathbf m|+p}
\sum_{r=1}^p
(-1)^{r-1}
\frac{\phi(\mathbf m)}
{\phi(\mathbf m+e_{i_r})}
(X^\alpha)^{\mathbf m+e_{i_r}}\,
d^\alpha X_{i_1}\wedge\cdots
\widehat{d^\alpha X_{i_r}}
\cdots\wedge d^\alpha X_{i_p},
\end{equation}
where \(e_j\) is the \(j\)-th standard multi-index and
\(|\mathbf m|=m_1+\cdots+m_d\). We extend \(K_\alpha\) linearly to all polynomial
fractional forms.
\end{definition}

The following theorem gives an explicit contracting homotopy,
not merely an abstract isomorphism with the classical polynomial de~Rham complex.
This strengthens the interpretation of the formal fractional exterior algebra as
a genuine chain-level object. At the same time, the result confirms that the
polynomial complex has classical cohomology; the fractional information is
encoded in the anchored local operators, the polynomial Caputo operators, the Gamma-weighted differential, and the
Gamma-weighted homotopy, not in new polynomial cohomology classes.

\begin{theorem}[fractional polynomial Poincar\'e lemma]
\label{thm:fractional-poincare}
Let
\begin{equation}
\pi_\alpha:
\Omega_\alpha^\bullet(U;\mathscr P_\alpha)
\longrightarrow
\Omega_\alpha^\bullet(U;\mathscr P_\alpha)
\end{equation}
be the projection onto constant \(0\)-forms, and zero on all positive form
degrees. Then
\begin{equation}
d^\alpha K_\alpha+K_\alpha d^\alpha
=
\mathrm{id}-\pi_\alpha.
\end{equation}
Consequently,
\begin{equation}
H^0\bigl(\Omega_\alpha^\bullet(U;\mathscr P_\alpha),d^\alpha\bigr)
\simeq \mathbb R,
\qquad
H^p\bigl(\Omega_\alpha^\bullet(U;\mathscr P_\alpha),d^\alpha\bigr)=0
\quad(p\ge1).
\end{equation}
\end{theorem}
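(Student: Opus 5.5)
The plan is to transport the classical Euler-homotopy (Poincaré) identity for the polynomial de~Rham complex through the chain isomorphism \(\Phi\) of Proposition~\ref{prop:dRham-reduction}. On \(\Omega_{\mathrm{poly}}^\bullet(\R^d_y)\) let \(K\) be the standard linear homotopy: for a monomial form \(y^{\mathbf m}dy_{i_1}\wedge\cdots\wedge dy_{i_p}\) with \(p\ge1\),
\begin{equation}
K\bigl(y^{\mathbf m}dy_I\bigr)=\frac{1}{|\mathbf m|+p}\sum_{r=1}^p(-1)^{r-1}y^{\mathbf m+e_{i_r}}\,dy_{i_1}\wedge\cdots\widehat{dy_{i_r}}\cdots\wedge dy_{i_p},
\end{equation}
and \(K=0\) on \(0\)-forms. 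This is \(\frac{1}{|\mathbf m|+p}\iota_E\), where \(E=\sum_j y_j\partial_{y_j}\) is the Euler field. I will first check that \(K_\alpha=\Phi^{-1}K\Phi\) on monomial forms. Applying \(\Phi\) multiplies \((X^\alpha)^{\mathbf m}d^\alpha X_I\) by \(\phi(\mathbf m)\). Then \(K\) produces the terms \(y^{\mathbf m+e_{i_r}}\), and \(\Phi^{-1}\) divides each by \(\phi(\mathbf m+e_{i_r})\). This reproduces exactly the ratio \(\phi(\mathbf m)/\phi(\mathbf m+e_{i_r})\) and the prefactor \(1/(|\mathbf m|+p)\) of Definition~\ref{def:fractional-homotopy}. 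Both sides are linear, so the identity holds on all forms. Similarly \(\Phi\pi_\alpha=\pi\Phi\), where \(\pi\) is projection onto constants, since \(\phi(\mathbf 0)=1\) and \(\Phi\) preserves polynomial degree and form degree.

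Next I verify the classical identity \(dK+Kd=\mathrm{id}-\pi\) on \(\Omega_{\mathrm{poly}}^\bullet\). Cartan's formula gives \(d\iota_E+\iota_E d=\mathcal L_E\). On a monomial form \(y^{\mathbf m}dy_I\), \(\mathcal L_E\) acts as multiplication by the total weight \(|\mathbf m|+p\), because \(\mathcal L_E dy_j=dy_j\). Both \(d\) and \(\iota_E\) preserve total weight: \(d\) lowers \(|\mathbf m|\) by one and raises \(p\) by one, while \(\iota_E\) does the reverse. Hence on each weight space of weight \(w\ge1\), \(K=\iota_E/w\) commutes appropriately with \(d\), and \(dK+Kd=\mathrm{id}\).

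The only weight-zero forms are the constant \(0\)-forms. There \(K=0\) and \(d\) of a constant is \(0\), so \(dK+Kd=0=\mathrm{id}-\pi\). To avoid quoting Cartan, one can instead expand \(d\iota_E+\iota_E d\) on a monomial directly. The cross terms cancel by the usual sign bookkeeping, leaving \((|\mathbf m|+p)\,y^{\mathbf m}dy_I\). The one point needing care is when \(K\) acts on \(d\omega\): each term of \(d\omega\) has weight \((|\mathbf m|-1)+(p+1)=|\mathbf m|+p\), so the same normalising constant appears. This is why the weight bookkeeping, rather than any analytic estimate, is the step to watch.

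Conjugating then gives
\begin{equation}
d^\alpha K_\alpha+K_\alpha d^\alpha=\Phi^{-1}(dK+Kd)\Phi=\Phi^{-1}(\mathrm{id}-\pi)\Phi=\mathrm{id}-\pi_\alpha.
\end{equation}
Here I use \(\Phi d^\alpha=d\Phi\) and its consequence \(d^\alpha\Phi^{-1}=\Phi^{-1}d\).

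For the cohomology, take \(p\ge1\) and a closed \(\omega\). Then \(\omega=d^\alpha K_\alpha\omega\) is exact, so \(H^p=0\). In degree \(0\), a closed \(a\) satisfies \(a=\pi_\alpha a\), so it is constant. Conversely, constants are closed because \(\mathsf D_i^\alpha 1=0\). There are no \((-1)\)-forms, hence \(H^0\simeq\R\).
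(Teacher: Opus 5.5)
Your proposal is correct and follows the paper's proof essentially verbatim: you conjugate the Euler-field homotopy \(K=\frac{1}{r+p}\iota_E\) on the polynomial de~Rham complex by the chain isomorphism \(\Phi\), check \(K_\alpha=\Phi^{-1}K\Phi\) on monomials, and invoke Cartan's formula. Your explicit checks, namely that \(\Phi\pi_\alpha=\pi\Phi\) because \(\phi(\mathbf 0)=1\), and that \(d\) preserves total weight so the same normalising constant applies to \(Kd\omega\), fill in details the paper leaves implicit.
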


\begin{proof}
Let
\begin{equation}
\Phi:
\Omega_\alpha^\bullet(U;\mathscr P_\alpha)
\to
\Omega_{\mathrm{poly}}^\bullet(\R^d_y)
\end{equation}
be the chain isomorphism from Proposition~\ref{prop:dRham-reduction}. On the
ordinary polynomial de~Rham complex in the variables \(y_i\), let
\begin{equation}
E=\sum_{i=1}^d y_i\frac{\partial}{\partial y_i}
\end{equation}
be the Euler vector field. If \(\eta\) is homogeneous of polynomial degree
\(r\) and form degree \(p\), define
\begin{equation}
K\eta
:=
\begin{cases}
\dfrac{1}{r+p}\,\iota_E\eta, & r+p>0,\\[0.8em]
0, & r=p=0.
\end{cases}
\end{equation}
By Cartan's formula,
\begin{equation}
d\iota_E+\iota_Ed=L_E.
\end{equation}
On a homogeneous polynomial \(p\)-form of polynomial degree \(r\), the Lie
derivative \(L_E\) acts by multiplication by \(r+p\). Therefore
\begin{equation}
dK+Kd=\mathrm{id}-\pi,
\end{equation}
where \(\pi\) is projection onto constant \(0\)-forms.

By construction,
\begin{equation}
K_\alpha=\Phi^{-1}K\Phi.
\end{equation}
Indeed, applying \(\Phi\), then \(K\), and then \(\Phi^{-1}\) to a monomial
fractional form gives exactly the coefficient in
Definition~\ref{def:fractional-homotopy}. Since
\begin{equation}
\Phi d^\alpha=d\Phi,
\end{equation}
we obtain
\begin{equation}
d^\alpha K_\alpha+K_\alpha d^\alpha
=
\Phi^{-1}(dK+Kd)\Phi
=
\Phi^{-1}(\mathrm{id}-\pi)\Phi
=
\mathrm{id}-\pi_\alpha.
\end{equation}
The cohomology statement follows immediately.
\end{proof}

\subsection{Rigidity of admissible linear coordinate changes}

The preceding reduction to the polynomial de~Rham complex might suggest that the
anchored formal calculus is merely a disguised classical calculus in the
variables \(y_i=X_i^\alpha\). The following result shows that this interpretation
has to be treated with care. In the real-valued anchored setting, linear changes
that preserve the fractional coordinate span are rigid once the relevant
fractional powers are required to remain real-valued on the anchored box.

In particular, this explains why the present
calculus is genuinely anchored and coordinate-sensitive. In the classical case
\(\alpha=1\), arbitrary invertible linear changes of coordinates preserve the
linear span of the coordinate functions. For \(0<\alpha<1\), mixing two
coordinates before taking the fractional power produces mixed dependence that
cannot be represented as a linear combination of the separate fractional
primitives \(X_j^\alpha\). In the real-valued anchored setting considered here,
the admissible linear maps are therefore restricted to positive-cone-preserving
maps, and preservation of the fractional coordinate span forces coordinate
permutations and positive rescalings.

\begin{proposition}[rigidity of real fractional linear coordinates]
\label{prop:fractional-coordinate-rigidity}
Let \(0<\alpha<1\), and let \(A=(a_{ij})\in\R^{d\times d}\). Assume that each row
functional
\begin{equation}
L_r(x):=(Ax)_r=\sum_{j=1}^d a_{rj}x_j
\end{equation}
is nonnegative on \(U\), so that \(L_r(x)^\alpha\) is real-valued on \(U\). For
\(r=1,\dots,d\), set
\begin{equation}
Y_r^\alpha(x)
:=
\frac{L_r(x)^\alpha}{\Gamma(1+\alpha)}.
\end{equation}
Assume that, for every \(r\), the function \(Y_r^\alpha\) lies in the linear span
of
\begin{equation}
X_1^\alpha,\dots,X_d^\alpha,
\qquad
X_j^\alpha(x)=\frac{x_j^\alpha}{\Gamma(1+\alpha)}.
\end{equation}
Then each row of \(A\) contains at most one nonzero entry. In particular, if
\(A\) is invertible, then \(A=DP\), where \(D\) is a positive diagonal matrix and
\(P\) is a permutation matrix.

Conversely, every such matrix \(DP\) preserves the linear span of
\(X_1^\alpha,\dots,X_d^\alpha\).
\end{proposition}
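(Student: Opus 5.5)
Fix a row $r$ and write $L=L_r$, $a_j=a_{rj}$. By hypothesis there are constants $c_1,\dots,c_d$ with
\begin{equation}
\Bigl(\sum_{j=1}^d a_j x_j\Bigr)^\alpha=\sum_{j=1}^d c_j x_j^\alpha\qquad\text{for all }x\in U,
\end{equation}
after clearing the common factor $\Gamma(1+\alpha)$. The plan is to restrict this identity to the coordinate axes and coordinate planes of the anchored box, where it becomes a one- or two-variable functional equation. The first step is to show that all $a_j\ge0$. On the axis $x=t e_j$ with $t\in[0,b_j]$, nonnegativity of $L$ gives $a_j t\ge0$, so $a_j\ge0$. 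Comparing the two sides on that axis gives $a_j^\alpha t^\alpha=c_j t^\alpha$, hence $c_j=a_j^\alpha$.

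Next, suppose for contradiction that two entries $a_j,a_k$ with $j\ne k$ are both positive. Restrict to the plane $x=s e_j+t e_k$ with $s\in[0,b_j]$ and $t\in[0,b_k]$, where the identity reads
\begin{equation}
(a_j s+a_k t)^\alpha=(a_j s)^\alpha+(a_k t)^\alpha .
\end{equation}
Take $u=a_j s>0$ and $v=a_k t>0$ small; this is possible since the box has positive side lengths. The function $\tau\mapsto\tau^\alpha$ is strictly concave with value $0$ at $0$, so it is strictly subadditive on $(0,\infty)$: $(u+v)^\alpha<u^\alpha+v^\alpha$. A quick way to see this is to divide by $(u+v)^\alpha$ and use $\theta^\alpha>\theta$ for $\theta\in(0,1)$. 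This contradiction shows each row has at most one nonzero entry, and by the first step that entry is positive.

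If $A$ is invertible, each row has exactly one nonzero, positive entry. Distinct rows cannot share a column, or the two rows would be proportional. Hence the columns $\sigma(r)$ of the nonzero entries form a permutation, and $A=DP$ with $D=\mathrm{diag}(a_{r\sigma(r)})>0$. For the converse, if $(DPx)_r=d_r x_{\sigma(r)}$ with $d_r>0$, then $L_r\ge0$ on $U$ and $Y_r^\alpha=d_r^\alpha X_{\sigma(r)}^\alpha$, which lies in the span.

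The main obstacle is only bookkeeping: the first step must be done before the strict-subadditivity argument, so that the signs of the $a_j$ are known and all arguments of $\tau^\alpha$ are nonnegative reals. The remaining steps are elementary.
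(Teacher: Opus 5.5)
Your proof is correct. Its outer structure matches the paper's: signs from the axes, then at most one nonzero entry per row, then the permutation step and the converse. The difference is in the core step.

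The paper never identifies the coefficients. After restricting to the coordinate two-plane it applies $\partial_s\partial_t$. This annihilates the additively separable right-hand side $c_ms^\alpha+c_nt^\alpha$ whatever $c_m,c_n$ are, and leaves $\alpha(\alpha-1)a_{rm}a_{rn}(a_{rm}s+a_{rn}t)^{\alpha-2}\neq 0$. You instead first read off $c_j=a_j^\alpha$ on the axes, and then contradict strict subadditivity of $\tau\mapsto\tau^\alpha$.

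Each route buys something different:
\begin{itemize}
\item The paper's argument is coefficient-free. However, it needs the identity on a two-dimensional set and $C^2$ regularity there.
\item Yours is derivative-free and uses the identity at only three points: $se_j$, $te_k$ and $se_j+te_k$ with $s,t>0$. No smallness of $s,t$ is needed; any $s\in(0,b_j]$, $t\in(0,b_k]$ works.
\item As a by-product, your argument produces the explicit surviving coefficient $c_j=a_j^\alpha$, which is exactly what the converse uses.
\end{itemize}

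For the invertible case, the paper argues by counting: there are no zero rows or columns, and each row has at most one nonzero entry. You observe instead that two rows supported in the same column would be proportional. The two arguments are equivalent.
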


\begin{proof}
First observe that the nonnegativity of \(L_r\) on \(U\) forces
\(a_{rj}\ge0\) for every \(j\). Indeed, setting all coordinates except
\(x_j=s\) equal to zero gives
\begin{equation}
L_r(se_j)=a_{rj}s\ge0
\end{equation}
for all \(s\in[0,b_j]\), hence \(a_{rj}\ge0\).

Fix a row \(r\). By assumption, there exist constants \(c_1,\dots,c_d\) such
that
\begin{equation}
L_r(x)^\alpha=\sum_{j=1}^d c_jx_j^\alpha
\end{equation}
on \(U\). Suppose, toward a contradiction, that two entries in the row are
positive, say \(a_{rm}>0\) and \(a_{rn}>0\) with \(m\neq n\). Restrict the
identity to the coordinate two-plane
\begin{equation}
x_m=s,
\qquad
x_n=t,
\qquad
x_k=0\quad(k\neq m,n).
\end{equation}
Then, for \(s,t>0\),
\begin{equation}
(a_{rm}s+a_{rn}t)^\alpha
=
c_ms^\alpha+c_nt^\alpha.
\end{equation}
Both sides are \(C^2\) on \((0,b_m]\times(0,b_n]\). Taking the mixed derivative
with respect to \(s\) and \(t\) gives
\begin{equation}
\alpha(\alpha-1)a_{rm}a_{rn}
(a_{rm}s+a_{rn}t)^{\alpha-2}
=
0,
\end{equation}
which is impossible because \(0<\alpha<1\) and \(a_{rm}a_{rn}>0\). Hence each row
has at most one nonzero entry.

If \(A\) is invertible, no row and no column can be zero. Since each row has at
most one nonzero entry, invertibility implies that each row and each column has
exactly one nonzero entry. Therefore \(A=DP\) for a positive diagonal matrix
\(D\) and a permutation matrix \(P\). Conversely, if \(A=DP\), then each
\(Y_r^\alpha\) is a positive scalar multiple of one of the \(X_j^\alpha\), so the
span is preserved.
\end{proof}

{\begin{remark}[coordinate changes and loss of power-law kernels]
\label{rem:coordinate-changes-pseudodifferential}
Proposition~\ref{prop:fractional-coordinate-rigidity} should also be understood
from the operator point of view. The left fractional integral
\begin{equation}
(I_i^\alpha f)(x)
=
\frac{1}{\Gamma(\alpha)}
\int_0^{x_i}(x_i-\xi)^{\alpha-1}
f(x_1,\dots,\xi,\dots,x_d)\,d\xi
\end{equation}
has a power-law memory kernel tied to the coordinate line and to the anchoring
hyperplane \(\{x_i=0\}\). Under a general change of variables this kernel is not
mapped to another one-dimensional power-law kernel in a new coordinate
direction. Instead, the transformed operator becomes a more complicated
nonlocal, generally pseudo-differential or integral operator depending on the
full transformation. Thus the present calculus is not invariant under arbitrary
coordinate transformations. The admissible transformations are precisely those
that preserve the anchored power-law structure of the chosen coframe, namely
coordinate permutations and positive rescalings in the real-valued linear
setting of Proposition~\ref{prop:fractional-coordinate-rigidity}.
\end{remark}}

\begin{remark}
The polynomial coefficient algebra is not claimed to be maximal. It is a concrete class on which the fractional exterior differential is closed and nilpotent with a transparent proof. The restriction is not merely cosmetic: on substantially larger coefficient classes, such as arbitrary anchored functions with only minimal regularity, mixed Caputo derivatives need not be available or need not commute without additional assumptions. Identifying broader natural coefficient algebras on which \(d^\alpha\) remains closed and nilpotent is therefore a genuine open direction.
\end{remark}

\begin{remark}[Relation with the classical coordinate expressions]
\label{rem:alpha-to-one}
We briefly record how the present construction relates to the ordinary coordinate formulas as \(\alpha\to1^-\), on the restricted classes considered here.

First, for fixed \(x\in U\) with \(x_i>0\),
\begin{equation}
X_i^\alpha(x)=\frac{x_i^\alpha}{\Gamma(1+\alpha)}\longrightarrow x_i
\qquad\text{as }\alpha\to1^-,
\end{equation}
since \(\Gamma(2)=1\). Second, for every fixed monomial family
\(M_\alpha=\prod_i (X_i^\alpha)^{m_i}\), the coefficient in the polynomial rule
\(\mathsf D_i^\alpha M_\alpha\) converges to the corresponding ordinary
coordinate coefficient as \(\alpha\to1^-\). This is a statement about the
coordinate formulas on the polynomial families considered here, not a convergence
theorem for the full anchored spaces or for a full de~Rham complex.

Accordingly, on the polynomial coefficient algebra used in this paper, the defining coordinate expressions converge to their ordinary first-order counterparts away from the coordinate hyperplanes. This should be interpreted as recovery of the corresponding \emph{coordinate formulas} on the restricted classes considered here, rather than as a proof that the present construction reproduces the full classical tangent bundle or the full de~Rham complex.
\end{remark}

\section{A Lorentzian coframe-attached Maxwell-type system on a spacetime box}

Fractional differential forms have already been used to formulate Maxwell-type systems. In particular, Baleanu, Golmankhaneh, Golmankhaneh, and Baleanu introduce a fractional exterior derivative containing both spatial and temporal Caputo derivatives and derive fractional Maxwell equations, charge conservation, and wave equations \cite{BaleanuMaxwell}. This development is part of a broader literature on fractional electromagnetics involving fractional multipoles, fractional curl operators, and fractional vector-calculus formulations; see, for example, \cite{Engheta1996,Engheta1997,Engheta1998,Tarasov2008}. The present section does not attempt to reproduce the full physical theory. Instead, we show that the formal exterior calculus developed above supports a mathematically consistent Lorentzian coframe-attached fractional Maxwell-type system on a bounded spacetime box within the polynomial coefficient class considered here. The emphasis is on the differential-form structure, on the continuity law that follows from \((d^\alpha)^2=0\), and on the associated fractional wave equations.

{The resulting equations should therefore not be identified with fractional electromagnetic models in which fractional vector-calculus operators act directly on general physical fields. Here the operators are the polynomial Caputo operators attached to the fixed fractional coframe \((d^\alpha\Theta,d^\alpha X_1,d^\alpha X_2,d^\alpha X_3)\), and the fields are restricted to \(\mathscr P_\alpha(U_{\mathrm{st}})\).}

\subsection{Spacetime notation}

Let
\begin{equation}
U_{\mathrm{st}}:=[0,T]\times[0,b_1]\times[0,b_2]\times[0,b_3]\subset\R^4
\end{equation}
be a spacetime box, with coordinates
\begin{equation}
(t,x_1,x_2,x_3).
\end{equation}
Define the fractional coordinate primitives
\begin{equation}
\Theta^\alpha(t):=\frac{t^\alpha}{\Gamma(1+\alpha)},
\qquad
X_i^\alpha(x):=\frac{x_i^\alpha}{\Gamma(1+\alpha)},
\quad i=1,2,3,
\end{equation}
and the polynomial coefficient algebra
\begin{equation}
\mathscr P_\alpha(U_{\mathrm{st}})
:=\R[\Theta^\alpha,X_1^\alpha,X_2^\alpha,X_3^\alpha].
\end{equation}
The polynomial operators in the four coordinate directions are denoted by
\begin{equation}
\mathsf D_t^\alpha,
\qquad
\mathsf D_i^\alpha\quad(i=1,2,3),
\end{equation}
with \(\mathsf D_t^\alpha\) acting on the generator \(\Theta^\alpha\) and
\(\mathsf D_i^\alpha\) acting on \(X_i^\alpha\), according to
Definition~\ref{def:poly-caputo-operator}.

We write
\begin{equation}
e^0:=d^\alpha\Theta,
\qquad
e^i:=d^\alpha X_i,
\quad i=1,2,3,
\end{equation}
for the distinguished coframe generators. For multi-indices we use the shorthand
\begin{equation}
e^{\mu\nu}:=e^\mu\wedge e^\nu,
\qquad
e^{\mu\nu\lambda}:=e^\mu\wedge e^\nu\wedge e^\lambda,
\qquad
e^{0123}:=e^0\wedge e^1\wedge e^2\wedge e^3.
\end{equation}
For a scalar coefficient \(f\) and a spatial vector field \(V=(V_1,V_2,V_3)\), define
\begin{equation}
\nabla_\alpha f
:=
(\mathsf D_1^\alpha f,\mathsf D_2^\alpha f,\mathsf D_3^\alpha f),
\end{equation}
\begin{equation}
\operatorname{div}_\alpha V
:=
\mathsf D_1^\alpha V_1
+
\mathsf D_2^\alpha V_2
+
\mathsf D_3^\alpha V_3,
\end{equation}
and
\begin{equation}
\operatorname{curl}_\alpha V
:=
\bigl(
\mathsf D_2^\alpha V_3-\mathsf D_3^\alpha V_2,\,
\mathsf D_3^\alpha V_1-\mathsf D_1^\alpha V_3,\,
\mathsf D_1^\alpha V_2-\mathsf D_2^\alpha V_1
\bigr).
\end{equation}
We also write
\begin{equation}
\Delta_\alpha V
:=
\bigl(
\sum_{j=1}^3\mathsf D_j^\alpha\mathsf D_j^\alpha V_1,\,
\sum_{j=1}^3\mathsf D_j^\alpha\mathsf D_j^\alpha V_2,\,
\sum_{j=1}^3\mathsf D_j^\alpha\mathsf D_j^\alpha V_3
\bigr)
\end{equation}
and
\begin{equation}
\square_\alpha:=\mathsf D_t^\alpha\mathsf D_t^\alpha-\Delta_\alpha.
\end{equation}
Here \(\mathsf D_t^\alpha\mathsf D_t^\alpha\) denotes repeated application of the
order-\(\alpha\) polynomial operator on the coefficient algebra; it is not
identified with a single Caputo derivative of order \(2\alpha\).

\subsection{Faraday and current forms}

Let \(E=(E_1,E_2,E_3)\), \(B=(B_1,B_2,B_3)\), \(\rho\), and
\(J=(J_1,J_2,J_3)\) have coefficients in
\(\mathscr P_\alpha(U_{\mathrm{st}})\). Define the Faraday \(2\)-form by
\begin{equation}
F
:=
- E_1e^{01}-E_2e^{02}-E_3e^{03}
+ B_1e^{23}-B_2e^{13}+B_3e^{12},
\end{equation}
and define the current \(3\)-form by
\begin{equation}
\mathcal J
:=
\rho\,e^{123}
-
J_1\,e^{023}
+
J_2\,e^{013}
-
J_3\,e^{012}.
\end{equation}

\begin{proposition}[homogeneous fractional Maxwell equations]
\label{prop:maxwell-homogeneous-new}
For the \(2\)-form \(F\), one has
\begin{equation}
d^\alpha F=0
\end{equation}
if and only if
\begin{equation}
\operatorname{div}_\alpha B=0,
\qquad
\mathsf D_t^\alpha B+\operatorname{curl}_\alpha E=0.
\end{equation}
\end{proposition}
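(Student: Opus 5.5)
The plan is to compute \(d^\alpha F\) directly from Definition~\ref{def:dalpha-new}, term by term, and then read off the coefficients of the four basis \(3\)-forms \(e^{123},e^{023},e^{013},e^{012}\). Since \(\Omega_\alpha^3(U_{\mathrm{st}};\mathscr P_\alpha)\) is by definition the direct sum \(\bigoplus_{|I|=3}\mathscr P_\alpha(U_{\mathrm{st}})\,e^I\), a \(3\)-form vanishes if and only if each of its four coefficients vanishes in \(\mathscr P_\alpha(U_{\mathrm{st}})\). So the equivalence reduces to identifying these four coefficients as \(\operatorname{div}_\alpha B\) and \(\pm\) the three components of \(\mathsf D_t^\alpha B+\operatorname{curl}_\alpha E\).

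For each summand \(a\,e^{\mu\nu}\) of \(F\), only the two directions \(\lambda\notin\{\mu,\nu\}\) contribute, because \(e^\lambda\wedge e^{\mu\nu}=0\) otherwise. Each surviving term \(\mathsf D_\lambda^\alpha a\, e^{\lambda\mu\nu}\) is then reordered into increasing index order, tracking the sign of the permutation. I will carry this out separately for the magnetic and the electric part.

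\emph{Magnetic part.} It produces:
\begin{itemize}[label={}]
\item from \(B_1e^{23}\): the terms \(\mathsf D_t^\alpha B_1\,e^{023}+\mathsf D_1^\alpha B_1\,e^{123}\);
\item from \(-B_2e^{13}\): the terms \(-\mathsf D_t^\alpha B_2\,e^{013}+\mathsf D_2^\alpha B_2\,e^{123}\), using \(e^{213}=-e^{123}\);
\item from \(B_3e^{12}\): the terms \(\mathsf D_t^\alpha B_3\,e^{012}+\mathsf D_3^\alpha B_3\,e^{123}\), using \(e^{312}=e^{123}\).
\end{itemize}

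\emph{Electric part.} Using \(e^{201}=e^{012}\), \(e^{301}=e^{013}\), \(e^{102}=-e^{012}\), \(e^{302}=e^{023}\), \(e^{103}=-e^{013}\) and \(e^{203}=-e^{023}\), it produces:
\begin{itemize}[label={}]
\item from \(-E_1e^{01}\): the terms \(-\mathsf D_2^\alpha E_1\,e^{012}-\mathsf D_3^\alpha E_1\,e^{013}\);
\item from \(-E_2e^{02}\): the terms \(\mathsf D_1^\alpha E_2\,e^{012}-\mathsf D_3^\alpha E_2\,e^{023}\);
\item from \(-E_3e^{03}\): the terms \(\mathsf D_1^\alpha E_3\,e^{013}+\mathsf D_2^\alpha E_3\,e^{023}\).
\end{itemize}

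Collecting, the expected result is
\begin{equation*}
d^\alpha F=(\operatorname{div}_\alpha B)\,e^{123}
+(\mathsf D_t^\alpha B+\operatorname{curl}_\alpha E)_1\,e^{023}
-(\mathsf D_t^\alpha B+\operatorname{curl}_\alpha E)_2\,e^{013}
+(\mathsf D_t^\alpha B+\operatorname{curl}_\alpha E)_3\,e^{012}.
\end{equation*}
Here the \(e^{013}\) coefficient is \(-\mathsf D_t^\alpha B_2-\mathsf D_3^\alpha E_1+\mathsf D_1^\alpha E_3\), which is exactly minus the second component of \(\mathsf D_t^\alpha B+\operatorname{curl}_\alpha E\). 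The sign pattern \(+,-,+\) mirrors the alternating signs \(B_1e^{23}-B_2e^{13}+B_3e^{12}\) built into the definition of \(F\).

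Uniqueness of coefficients then gives the asserted equivalence in both directions. Note that the argument uses only linearity of the operators \(\mathsf D_\mu^\alpha\) and the stability of \(\mathscr P_\alpha(U_{\mathrm{st}})\) from Lemma~\ref{lem:poly-stability-commutation}; no commutation and no Leibniz rule are needed, since only first applications of \(\mathsf D^\alpha\) occur. The only real obstacle is sign bookkeeping in the permutations of \(e^{\lambda\mu\nu}\), particularly for the mixed \(e^{013}\) coefficient. I will double-check that step by verifying it against the classical \(\alpha=1\) computation of \(dF\), which has the identical combinatorial structure.
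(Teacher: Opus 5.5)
Your proposal is correct and follows the paper's proof: you expand \(d^\alpha F\) via Definition~\ref{def:dalpha-new}, collect the coefficients of \(e^{123},e^{023},e^{013},e^{012}\), and conclude by uniqueness of coefficients in the direct sum. Your explicit sign bookkeeping reproduces exactly the paper's displayed formula with the \(+,-,+\) pattern, and you are right that only linearity and stability of the \(\mathsf D_\mu^\alpha\) are used, not commutation.
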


\begin{proof}
Since the generators \(e^\mu\) are constant in the exterior algebra, the operator
\(d^\alpha\) acts only on the coefficients. Expanding
Definition~\ref{def:dalpha-new} and collecting the coefficients of the ordered
basis \(3\)-forms
\begin{equation}
e^{123},\qquad e^{023},\qquad e^{013},\qquad e^{012},
\end{equation}
yields
\begin{equation}
d^\alpha F
=
(\operatorname{div}_\alpha B)e^{123}
+
\bigl(\mathsf D_t^\alpha B_1+(\operatorname{curl}_\alpha E)_1\bigr)e^{023}
\end{equation}
\begin{equation}
\qquad
-
\bigl(\mathsf D_t^\alpha B_2+(\operatorname{curl}_\alpha E)_2\bigr)e^{013}
+
\bigl(\mathsf D_t^\alpha B_3+(\operatorname{curl}_\alpha E)_3\bigr)e^{012}.
\end{equation}
Since these basis \(3\)-forms are linearly independent, the claim follows.
\end{proof}

\begin{proposition}[continuity form]
\label{prop:continuity-form-new}
For the \(3\)-form \(\mathcal J\), one has
\begin{equation}
d^\alpha\mathcal J=0
\end{equation}
if and only if
\begin{equation}
\mathsf D_t^\alpha \rho+\operatorname{div}_\alpha J=0.
\end{equation}
\end{proposition}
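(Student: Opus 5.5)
The plan is to mirror the proof of Proposition~\ref{prop:maxwell-homogeneous-new}. Since the coframe generators \(e^\mu\) are constant in the exterior algebra, Definition~\ref{def:dalpha-new} shows that \(d^\alpha\) acts only on the coefficients of \(\mathcal J\). Each coefficient term then produces four summands \((\mathsf D_\mu^\alpha a)\,e^\mu\wedge e^{I}\), where \(\mathsf D_0^\alpha:=\mathsf D_t^\alpha\). Since \(\mathcal J\) is a \(3\)-form on a \(4\)-dimensional box, for each term \(a\,e^{I}\) only the single index \(\mu\notin I\) survives; every other choice repeats a generator and vanishes.

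Next I will compute the four surviving contributions one at a time:
\begin{equation}
d^\alpha(\rho\,e^{123})=(\mathsf D_t^\alpha\rho)\,e^{0123},
\qquad
d^\alpha(-J_1e^{023})=-(\mathsf D_1^\alpha J_1)\,e^{1}\wedge e^{023},
\end{equation}
\begin{equation}
d^\alpha(J_2e^{013})=(\mathsf D_2^\alpha J_2)\,e^{2}\wedge e^{013},
\qquad
d^\alpha(-J_3e^{012})=-(\mathsf D_3^\alpha J_3)\,e^{3}\wedge e^{012}.
\end{equation}
The only real care is in the signs when reordering to \(e^{0123}\) using skew-commutation. Moving \(e^1\) past \(e^0\) gives \(e^1\wedge e^{023}=-e^{0123}\). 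Moving \(e^2\) past \(e^0,e^1\) gives \(e^2\wedge e^{013}=+e^{0123}\). Moving \(e^3\) past \(e^0,e^1,e^2\) gives \(e^3\wedge e^{012}=-e^{0123}\).

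Substituting these signs, each spatial term contributes \(+\mathsf D_j^\alpha J_j\). Hence
\begin{equation}
d^\alpha\mathcal J=\bigl(\mathsf D_t^\alpha\rho+\operatorname{div}_\alpha J\bigr)\,e^{0123}.
\end{equation}
By Lemma~\ref{lem:poly-stability-commutation} the coefficient lies in \(\mathscr P_\alpha(U_{\mathrm{st}})\). Since \(e^{0123}\) is a basis element of the free rank-one module \(\Omega_\alpha^4\), we get \(d^\alpha\mathcal J=0\) if and only if this coefficient vanishes as an element of \(\mathscr P_\alpha\). By Lemma~\ref{lem:fractional-coordinates-independent}, that is the same as vanishing as a function on \(U_{\mathrm{st}}\). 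This proves the equivalence.

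The only step where an error could occur is the sign bookkeeping. The alternating signs built into the definition of \(\mathcal J\) are chosen precisely to cancel the permutation signs, and I will double-check each transposition count explicitly.
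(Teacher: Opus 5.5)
Your proposal is correct and is essentially the paper's proof: apply the definition of \(d^\alpha\) coefficientwise, note that in each term only the missing index survives, and collect everything on \(e^{0123}\) to get \(d^\alpha\mathcal J=(\mathsf D_t^\alpha\rho+\operatorname{div}_\alpha J)\,e^{0123}\), from which the equivalence follows because \(e^{0123}\) is a basis element. Your transposition counts \(e^1\wedge e^{023}=-e^{0123}\), \(e^2\wedge e^{013}=e^{0123}\) and \(e^3\wedge e^{012}=-e^{0123}\) are right, and they spell out the sign bookkeeping that the paper leaves implicit.
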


\begin{proof}
Applying Definition~\ref{def:dalpha-new} and collecting the coefficient of
\(e^{0123}\), we obtain
\begin{equation}
d^\alpha\mathcal J
=
\bigl(
\mathsf D_t^\alpha\rho
+
\mathsf D_1^\alpha J_1
+
\mathsf D_2^\alpha J_2
+
\mathsf D_3^\alpha J_3
\bigr)e^{0123}.
\end{equation}
Hence \(d^\alpha\mathcal J=0\) is equivalent to the stated continuity equation.
\end{proof}

\subsection{A Lorentzian coframe-attached Hodge operator}

The previous constructions are independent of a metric. To formulate a
Maxwell-type system with the usual hyperbolic sign structure, we now attach a
Lorentzian sign convention to the distinguished coframe
\begin{equation}
e^0=d^\alpha\Theta,
\qquad
e^i=d^\alpha X_i,
\quad i=1,2,3.
\end{equation}
Define
\begin{equation}
\star_\alpha:
\Omega_\alpha^2(U_{\mathrm{st}};\mathscr P_\alpha)
\to
\Omega_\alpha^2(U_{\mathrm{st}};\mathscr P_\alpha)
\end{equation}
as the \(\mathscr P_\alpha(U_{\mathrm{st}})\)-linear map determined by
\begin{equation}
\star_\alpha(e^{01})=-e^{23},
\qquad
\star_\alpha(e^{02})=e^{13},
\qquad
\star_\alpha(e^{03})=-e^{12},
\end{equation}
\begin{equation}
\star_\alpha(e^{23})=e^{01},
\qquad
\star_\alpha(e^{13})=-e^{02},
\qquad
\star_\alpha(e^{12})=e^{03}.
\end{equation}
Then
\begin{equation}
\star_\alpha^2=-\mathrm{id}
\qquad\text{on }
\Omega_\alpha^2(U_{\mathrm{st}};\mathscr P_\alpha),
\end{equation}
as in the Lorentzian four-dimensional Hodge theory on \(2\)-forms.

We note that the operator \(\star_\alpha\) is coframe-attached: it is defined with respect to
the distinguished fractional coframe
\((d^\alpha\Theta,d^\alpha X_1,d^\alpha X_2,d^\alpha X_3)\). It is not claimed to
come from a fully intrinsic fractional Lorentzian metric. Its purpose is to
encode the Lorentzian sign convention needed for the Maxwell-type equations
below.

\subsection{The Lorentzian coframe-attached Maxwell system}

\begin{definition}[Lorentzian coframe-attached fractional Maxwell system]
\label{def:maxwell-type-system}
\label{def:lorentzian-fractional-maxwell}
We say that \((F,\mathcal J)\) satisfies the Lorentzian coframe-attached
fractional Maxwell system if
\begin{equation}
d^\alpha F=0,
\qquad
d^\alpha(\star_\alpha F)=\mathcal J.
\end{equation}
\end{definition}

\begin{proposition}[component form of the Maxwell system]
\label{prop:component-maxwell-lorentzian}
The system in Definition~\ref{def:lorentzian-fractional-maxwell} is equivalent to
\begin{equation}
\operatorname{div}_\alpha B=0,
\qquad
\mathsf D_t^\alpha B+\operatorname{curl}_\alpha E=0,
\end{equation}
and
\begin{equation}
\operatorname{div}_\alpha E=\rho,
\qquad
\operatorname{curl}_\alpha B-\mathsf D_t^\alpha E=J.
\end{equation}
\end{proposition}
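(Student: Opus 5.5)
The plan is to reduce everything to Proposition~\ref{prop:maxwell-homogeneous-new}, applied twice. The first equation \(d^\alpha F=0\) is, by that proposition, equivalent to \(\operatorname{div}_\alpha B=0\) and \(\mathsf D_t^\alpha B+\operatorname{curl}_\alpha E=0\). So only the second equation \(d^\alpha(\star_\alpha F)=\mathcal J\) needs work.

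First I compute \(\star_\alpha F\) using the \(\mathscr P_\alpha(U_{\mathrm{st}})\)-linearity of \(\star_\alpha\) and the six defining values. This gives
\begin{equation*}
\star_\alpha F
= B_1e^{01}+B_2e^{02}+B_3e^{03}
+E_1e^{23}-E_2e^{13}+E_3e^{12}.
\end{equation*}
This is exactly the Faraday form of Section~7.2 with the substitution \((E,B)\mapsto(-B,E)\): indeed \(-(-B_k)e^{0k}=B_ke^{0k}\), and the magnetic slots now carry \(E\). All coefficients still lie in \(\mathscr P_\alpha(U_{\mathrm{st}})\).

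Second, I apply the coefficient expansion established in the proof of Proposition~\ref{prop:maxwell-homogeneous-new} to this substituted form. That expansion is a purely algebraic identity valid for arbitrary polynomial coefficients. It yields
\begin{equation*}
d^\alpha(\star_\alpha F)
=(\operatorname{div}_\alpha E)e^{123}
+\bigl(\mathsf D_t^\alpha E_1-(\operatorname{curl}_\alpha B)_1\bigr)e^{023}
-\bigl(\mathsf D_t^\alpha E_2-(\operatorname{curl}_\alpha B)_2\bigr)e^{013}
+\bigl(\mathsf D_t^\alpha E_3-(\operatorname{curl}_\alpha B)_3\bigr)e^{012}.
\end{equation*}

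Third, I compare this with \(\mathcal J=\rho e^{123}-J_1e^{023}+J_2e^{013}-J_3e^{012}\). The ordered basis \(3\)-forms are linearly independent, so equality is equivalent to
\begin{itemize}
\item \(\operatorname{div}_\alpha E=\rho\), from the \(e^{123}\) coefficient;
\item \(\mathsf D_t^\alpha E_k-(\operatorname{curl}_\alpha B)_k=-J_k\) for each \(k\), from the remaining coefficients.
\end{itemize}
The second condition is precisely \(\operatorname{curl}_\alpha B-\mathsf D_t^\alpha E=J\). Together with the first step, this gives the asserted equivalence.

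The main obstacle is sign bookkeeping in two places: the Hodge values on \(e^{13}\) versus \(e^{12}\), and the orientation of \(e^{013}\) relative to the other basis forms. To guard against an error there, I would verify directly one component of \(d^\alpha(\star_\alpha F)\), say the \(e^{023}\) coefficient. It collects \(\mathsf D_t^\alpha E_1\) from \(e^0\wedge e^{23}\), \(-\mathsf D_2^\alpha B_3\) from \(e^2\wedge e^{03}=-e^{023}\), and \(\mathsf D_3^\alpha B_2\) from \(e^3\wedge e^{02}=e^{023}\). This reproduces \(\mathsf D_t^\alpha E_1-(\operatorname{curl}_\alpha B)_1\) and so confirms the substitution argument independently.
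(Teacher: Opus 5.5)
Your proof is correct and follows the paper's route: the homogeneous half is Proposition~\ref{prop:maxwell-homogeneous-new}, and the inhomogeneous half comes from computing $\star_\alpha F$, expanding $d^\alpha(\star_\alpha F)$, and matching coefficients with $\mathcal J$ in the linearly independent basis $3$-forms. The only difference is how you get the expansion of $d^\alpha(\star_\alpha F)$: you substitute $(E,B)\mapsto(-B,E)$ into the already-established expansion of $d^\alpha F$, and this substitution checks out, whereas the paper collects the coefficients directly.
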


\begin{proof}
The homogeneous equation \(d^\alpha F=0\) is exactly
Proposition~\ref{prop:maxwell-homogeneous-new}. For the inhomogeneous equation,
the Lorentzian coframe-attached Hodge operator gives
\begin{equation}
\star_\alpha F
=
E_1e^{23}-E_2e^{13}+E_3e^{12}
+B_1e^{01}+B_2e^{02}+B_3e^{03}.
\end{equation}
Collecting coefficients in \(d^\alpha(\star_\alpha F)\), we find
\begin{equation}
d^\alpha(\star_\alpha F)
=
(\operatorname{div}_\alpha E)e^{123}
-
\bigl((\operatorname{curl}_\alpha B)_1-\mathsf D_t^\alpha E_1\bigr)e^{023}
\end{equation}
\begin{equation}
\qquad
+
\bigl((\operatorname{curl}_\alpha B)_2-\mathsf D_t^\alpha E_2\bigr)e^{013}
-
\bigl((\operatorname{curl}_\alpha B)_3-\mathsf D_t^\alpha E_3\bigr)e^{012}.
\end{equation}
Equating this with
\begin{equation}
\mathcal J
=
\rho e^{123}-J_1e^{023}+J_2e^{013}-J_3e^{012}
\end{equation}
gives precisely
\begin{equation}
\operatorname{div}_\alpha E=\rho,
\qquad
\operatorname{curl}_\alpha B-\mathsf D_t^\alpha E=J.
\end{equation}
\end{proof}

\begin{theorem}[fractional charge conservation]
\label{thm:lorentzian-charge-conservation}
\label{thm:charge-conservation-new}
Every solution of the Lorentzian coframe-attached fractional Maxwell system
satisfies
\begin{equation}
d^\alpha\mathcal J=0.
\end{equation}
Equivalently,
\begin{equation}
\mathsf D_t^\alpha\rho+\operatorname{div}_\alpha J=0.
\end{equation}
\end{theorem}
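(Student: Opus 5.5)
The plan is to deduce the continuity law directly from nilpotency, applied to the inhomogeneous equation. Let \((F,\mathcal J)\) solve the system of Definition~\ref{def:lorentzian-fractional-maxwell}. First I will check that \(\star_\alpha F\) is a genuine element of \(\Omega_\alpha^2(U_{\mathrm{st}};\mathscr P_\alpha)\). This holds because \(\star_\alpha\) is \(\mathscr P_\alpha(U_{\mathrm{st}})\)-linear and maps basis \(2\)-forms to basis \(2\)-forms, and its coefficients are explicitly \(\pm E_i,\pm B_i\), as in the proof of Proposition~\ref{prop:component-maxwell-lorentzian}. Therefore Theorem~\ref{thm:nilpotency-new} applies to it. Here Theorem~\ref{thm:nilpotency-new} is used in the four-dimensional setting of the spacetime box, with generators \(\Theta^\alpha,X_1^\alpha,X_2^\alpha,X_3^\alpha\). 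Its proof only uses Lemma~\ref{lem:poly-stability-commutation}, and that lemma holds verbatim for \(\mathsf D_t^\alpha,\mathsf D_1^\alpha,\mathsf D_2^\alpha,\mathsf D_3^\alpha\), since each operator lowers one exponent with a coefficient depending only on that exponent. I will state this transfer explicitly as a one-line remark.

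Next, applying \(d^\alpha\) to the inhomogeneous equation gives
\begin{equation}
d^\alpha\mathcal J=d^\alpha\bigl(d^\alpha(\star_\alpha F)\bigr)=(d^\alpha)^2(\star_\alpha F)=0.
\end{equation}
This is the first claim. The homogeneous equation \(d^\alpha F=0\) is not needed.

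For the equivalent component form, I will invoke Proposition~\ref{prop:continuity-form-new}: \(d^\alpha\mathcal J=0\) holds if and only if \(\mathsf D_t^\alpha\rho+\operatorname{div}_\alpha J=0\).

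As an independent sanity check, I can also argue from the components of Proposition~\ref{prop:component-maxwell-lorentzian}. Substituting \(\rho=\operatorname{div}_\alpha E\) and \(J=\operatorname{curl}_\alpha B-\mathsf D_t^\alpha E\) gives
\begin{equation}
\mathsf D_t^\alpha\rho+\operatorname{div}_\alpha J=\mathsf D_t^\alpha\operatorname{div}_\alpha E-\operatorname{div}_\alpha\mathsf D_t^\alpha E+\operatorname{div}_\alpha\operatorname{curl}_\alpha B.
\end{equation}
The first two terms cancel because \(\mathsf D_t^\alpha\) commutes with each \(\mathsf D_i^\alpha\). The divergence of the curl vanishes because the mixed spatial operators commute pairwise.

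The only step requiring care, and thus the main obstacle, is making sure the commutation lemma and nilpotency legitimately extend to the time generator. Both are purely monomial-level statements and carry over unchanged, so I do not anticipate any genuine difficulty.
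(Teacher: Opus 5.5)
Your proposal is correct and matches the paper's proof: nilpotency (Theorem~\ref{thm:nilpotency-new}) applied to \(\star_\alpha F\) gives \(d^\alpha\mathcal J=(d^\alpha)^2(\star_\alpha F)=0\), and Proposition~\ref{prop:continuity-form-new} supplies the component form. Your remarks that \(\star_\alpha F\) has polynomial coefficients and that the commutation lemma carries over to the four spacetime generators make explicit what the paper leaves implicit. Your component-level cross-check is also valid, though not needed.
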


\begin{proof}
By Definition~\ref{def:lorentzian-fractional-maxwell},
\begin{equation}
\mathcal J=d^\alpha(\star_\alpha F).
\end{equation}
Therefore, by nilpotency,
\begin{equation}
d^\alpha\mathcal J=(d^\alpha)^2(\star_\alpha F)=0.
\end{equation}
The component form is Proposition~\ref{prop:continuity-form-new}.
\end{proof}

\begin{lemma}[spacetime commutation on polynomial coefficients]
\label{lem:spacetime-poly-commutation}
On \(\mathscr P_\alpha(U_{\mathrm{st}})\), the polynomial Caputo operators commute
in all spacetime directions:
\begin{equation}
\mathsf D_\mu^\alpha\mathsf D_\nu^\alpha
=
\mathsf D_\nu^\alpha\mathsf D_\mu^\alpha
\qquad(\mu,\nu=0,1,2,3),
\end{equation}
where \(\mathsf D_0^\alpha=\mathsf D_t^\alpha\). In particular,
\begin{equation}
\mathsf D_t^\alpha(\operatorname{curl}_\alpha V)
=
\operatorname{curl}_\alpha(\mathsf D_t^\alpha V)
\end{equation}
for every polynomial vector field \(V\).
\end{lemma}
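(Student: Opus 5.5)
The plan is to reduce the statement to Lemma~\ref{lem:poly-stability-commutation}, applied now in dimension four. The spacetime box \(U_{\mathrm{st}}\) is a rectangle of the form considered in Section~2 with \(d=4\), using coordinates \((x_0,x_1,x_2,x_3)=(t,x_1,x_2,x_3)\) and \(X_0^\alpha=\Theta^\alpha\). Accordingly, \(\mathscr P_\alpha(U_{\mathrm{st}})=\R[X_0^\alpha,\dots,X_3^\alpha]\) is exactly the coefficient algebra \(\mathscr P_\alpha(U)\) in this dimension. The operators \(\mathsf D_t^\alpha,\mathsf D_1^\alpha,\mathsf D_2^\alpha,\mathsf D_3^\alpha\) are, by their definition in Section~7, the operators of Definition~\ref{def:poly-caputo-operator} for \(i=0,1,2,3\). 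Well-definedness on the monomial basis follows from Lemma~\ref{lem:fractional-coordinates-independent} with \(d=4\).

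For completeness I would also recall the monomial argument directly. Take \(M=\prod_{\kappa=0}^3(X_\kappa^\alpha)^{m_\kappa}\) and \(\mu\neq\nu\). By Definition~\ref{def:poly-caputo-operator}, \(\mathsf D_\nu^\alpha\) multiplies \(M\) by \(g(m_\nu)\) and lowers \(m_\nu\) by one, or gives zero if \(m_\nu=0\). Here \(g\) is the Gamma ratio used in the proof of Proposition~\ref{prop:dRham-reduction}, and it depends only on \(m_\nu\). Since \(\mathsf D_\nu^\alpha\) does not change \(m_\mu\), applying \(\mathsf D_\mu^\alpha\) afterward yields \(g(m_\mu)g(m_\nu)\,(X^\alpha)^{\mathbf m-e_\mu-e_\nu}\), or zero. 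This expression is symmetric in \(\mu\) and \(\nu\), so the two compositions agree. The case \(\mu=\nu\) is trivial. Linearity then extends the identity to all of \(\mathscr P_\alpha(U_{\mathrm{st}})\).

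For the curl identity, I would apply the relation componentwise. For example,
\[
\mathsf D_t^\alpha(\mathsf D_2^\alpha V_3-\mathsf D_3^\alpha V_2)=\mathsf D_2^\alpha\mathsf D_t^\alpha V_3-\mathsf D_3^\alpha\mathsf D_t^\alpha V_2=(\operatorname{curl}_\alpha(\mathsf D_t^\alpha V))_1.
\]
This uses linearity of \(\mathsf D_t^\alpha\) together with commutation for \((\mu,\nu)=(0,j)\). The other two components are handled in the same way. The result \(\mathsf D_t^\alpha V\) is again a polynomial vector field by the stability part of Lemma~\ref{lem:poly-stability-commutation}.

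I do not expect a real obstacle here. The one point needing care is to confirm that the time direction carries no special anchoring, so that \(\mathsf D_t^\alpha\) really is the \(i=0\) instance of the same definition.
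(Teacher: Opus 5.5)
Your proposal is correct and follows essentially the same route as the paper: the commutation is Lemma~\ref{lem:poly-stability-commutation} applied with $d=4$ to $\R[\Theta^\alpha,X_1^\alpha,X_2^\alpha,X_3^\alpha]$, and the curl identity follows componentwise from linearity and commutation of $\mathsf D_t^\alpha$ with the spatial $\mathsf D_j^\alpha$. Your explicit re-derivation on monomials and your check that $\mathsf D_t^\alpha$ is just the $i=0$ instance of Definition~\ref{def:poly-caputo-operator} add detail but do not change the argument.
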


\begin{proof}
The first assertion is Lemma~\ref{lem:poly-stability-commutation} applied to the
spacetime polynomial algebra
\begin{equation}
\mathscr P_\alpha(U_{\mathrm{st}})
=
\R[\Theta^\alpha,X_1^\alpha,X_2^\alpha,X_3^\alpha].
\end{equation}
The displayed commutation with \(\operatorname{curl}_\alpha\) follows component
by component, since each component of \(\operatorname{curl}_\alpha V\) is a
signed sum of spatial polynomial Caputo derivatives of the components of \(V\).
\end{proof}

\begin{lemma}[fractional vector identities on \(\mathscr P_\alpha\)]
\label{lem:fractional-vector-identities}
On the polynomial coefficient algebra \(\mathscr P_\alpha(U_{\mathrm{st}})\), one has
\begin{equation}
\operatorname{div}_\alpha(\operatorname{curl}_\alpha V)=0,
\qquad
\operatorname{curl}_\alpha(\nabla_\alpha f)=0,
\end{equation}
and
\begin{equation}
\operatorname{curl}_\alpha(\operatorname{curl}_\alpha V)
=
\nabla_\alpha(\operatorname{div}_\alpha V)-\Delta_\alpha V.
\end{equation}
\end{lemma}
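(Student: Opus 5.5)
All three identities reduce to componentwise computations in which the only nontrivial input is the commutation $\mathsf D_i^\alpha\mathsf D_j^\alpha=\mathsf D_j^\alpha\mathsf D_i^\alpha$ on $\mathscr P_\alpha(U_{\mathrm{st}})$ from Lemma~\ref{lem:spacetime-poly-commutation} (equivalently Lemma~\ref{lem:poly-stability-commutation}). Stability of $\mathscr P_\alpha(U_{\mathrm{st}})$ under each $\mathsf D_i^\alpha$ guarantees that every composition below is defined and again lies in the polynomial class. Linearity of each $\mathsf D_i^\alpha$ lets us distribute over sums. No Leibniz rule is needed, since no products of fields appear.

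For $\operatorname{div}_\alpha(\operatorname{curl}_\alpha V)$ I will expand the definition to get
\begin{equation*}
\mathsf D_1^\alpha\mathsf D_2^\alpha V_3-\mathsf D_1^\alpha\mathsf D_3^\alpha V_2
+\mathsf D_2^\alpha\mathsf D_3^\alpha V_1-\mathsf D_2^\alpha\mathsf D_1^\alpha V_3
+\mathsf D_3^\alpha\mathsf D_1^\alpha V_2-\mathsf D_3^\alpha\mathsf D_2^\alpha V_1 .
\end{equation*}
These six terms cancel in pairs by commutation. Similarly, each component of $\operatorname{curl}_\alpha(\nabla_\alpha f)$ has the form $\mathsf D_j^\alpha\mathsf D_k^\alpha f-\mathsf D_k^\alpha\mathsf D_j^\alpha f=0$.

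Alternatively, one could derive these two identities from Theorem~\ref{thm:nilpotency-new} applied to spatial $1$- and $2$-forms. The direct computation is shorter, however, and avoids sign bookkeeping with the wedge basis.

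For the curl–curl identity I will treat the first component; the others follow by cyclic permutation $(1,2,3)$. The first component is
\begin{equation*}
\mathsf D_2^\alpha(\mathsf D_1^\alpha V_2-\mathsf D_2^\alpha V_1)-\mathsf D_3^\alpha(\mathsf D_3^\alpha V_1-\mathsf D_1^\alpha V_3).
\end{equation*}
Using commutation, I rewrite $\mathsf D_2^\alpha\mathsf D_1^\alpha V_2+\mathsf D_3^\alpha\mathsf D_1^\alpha V_3$ as $\mathsf D_1^\alpha(\mathsf D_2^\alpha V_2+\mathsf D_3^\alpha V_3)$. I then add and subtract $\mathsf D_1^\alpha\mathsf D_1^\alpha V_1$, which yields $\mathsf D_1^\alpha\operatorname{div}_\alpha V-\sum_j\mathsf D_j^\alpha\mathsf D_j^\alpha V_1$. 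This is exactly the first component of $\nabla_\alpha(\operatorname{div}_\alpha V)-\Delta_\alpha V$.

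The only point that requires care is this add-and-subtract step. The diagonal term $\mathsf D_1^\alpha\mathsf D_1^\alpha V_1$ has to be interpreted as the repeated application of the order-$\alpha$ operator, the same convention used in the definition of $\Delta_\alpha$. It must not be read as a single Caputo derivative of order $2\alpha$. Since both sides of the identity use the same iterated operator, the cancellation is exact, and no further obstacle arises.
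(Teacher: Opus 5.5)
Your proposal is correct and follows the paper's proof essentially step for step. Both expand the identities componentwise and cancel the mixed terms using the commutation in Lemma~\ref{lem:spacetime-poly-commutation}. For the curl--curl identity, both regroup $\mathsf D_2^\alpha\mathsf D_1^\alpha V_2+\mathsf D_3^\alpha\mathsf D_1^\alpha V_3$ as $\mathsf D_1^\alpha(\mathsf D_2^\alpha V_2+\mathsf D_3^\alpha V_3)$, add and subtract $\mathsf D_1^\alpha\mathsf D_1^\alpha V_1$, and obtain the remaining components by cyclic symmetry.
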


\begin{proof}
The proof is the same algebraic cancellation as in the classical case, with
ordinary partial derivatives replaced by the commuting polynomial operators
\(\mathsf D_i^\alpha\). The required commutation of mixed spatial derivatives is
Lemma~\ref{lem:spacetime-poly-commutation}. Expanding the components gives, for
example,
\begin{equation}
\operatorname{div}_\alpha(\operatorname{curl}_\alpha V)
=
\mathsf D_1^\alpha(\mathsf D_2^\alpha V_3-\mathsf D_3^\alpha V_2)
+\mathsf D_2^\alpha(\mathsf D_3^\alpha V_1-\mathsf D_1^\alpha V_3)
+\mathsf D_3^\alpha(\mathsf D_1^\alpha V_2-\mathsf D_2^\alpha V_1)=0,
\end{equation}
with cancellation following from pairwise commutation. The identity
\(\operatorname{curl}_\alpha(\nabla_\alpha f)=0\) is identical componentwise. For
the curl--curl identity, the first component is
\begin{equation}
(\operatorname{curl}_\alpha\operatorname{curl}_\alpha V)_1
=
\mathsf D_2^\alpha(\mathsf D_1^\alpha V_2-\mathsf D_2^\alpha V_1)
-
\mathsf D_3^\alpha(\mathsf D_3^\alpha V_1-\mathsf D_1^\alpha V_3),
\end{equation}
which, using commutation, equals
\begin{equation}
\mathsf D_1^\alpha(\mathsf D_2^\alpha V_2+\mathsf D_3^\alpha V_3)
-
\bigl(\mathsf D_2^\alpha\mathsf D_2^\alpha V_1+
\mathsf D_3^\alpha\mathsf D_3^\alpha V_1\bigr).
\end{equation}
Adding and subtracting \(\mathsf D_1^\alpha\mathsf D_1^\alpha V_1\) gives the
first component of
\(\nabla_\alpha(\operatorname{div}_\alpha V)-\Delta_\alpha V\). The other two
components are obtained by the same cyclic expansion.
\end{proof}

\begin{theorem}[fractional wave equations]
\label{thm:fractional-wave-equations}
Every solution of the Lorentzian coframe-attached fractional Maxwell system
satisfies
\begin{equation}
\square_\alpha B
=
\operatorname{curl}_\alpha J,
\end{equation}
and
\begin{equation}
\square_\alpha E
=
-\mathsf D_t^\alpha J-\nabla_\alpha\rho.
\end{equation}
In particular, in the source-free case \(\rho=0\) and \(J=0\), both \(E\) and
\(B\) satisfy
\begin{equation}
\square_\alpha E=0,
\qquad
\square_\alpha B=0.
\end{equation}
\end{theorem}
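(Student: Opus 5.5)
The plan is to mimic the classical derivation of the wave equations from Maxwell's equations, using the component form of Proposition~\ref{prop:component-maxwell-lorentzian} together with the algebraic identities of Lemma~\ref{lem:spacetime-poly-commutation} and Lemma~\ref{lem:fractional-vector-identities}. All fields lie in \(\mathscr P_\alpha(U_{\mathrm{st}})\), which is stable under every \(\mathsf D_\mu^\alpha\) by Lemma~\ref{lem:poly-stability-commutation}. Hence every composition written below is well defined and stays in the coefficient class. No analytic input is needed; everything reduces to linearity and pairwise commutation of the polynomial Caputo operators.

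For \(B\), I apply \(\mathsf D_t^\alpha\) to the Amp\`ere-type equation \(\operatorname{curl}_\alpha B-\mathsf D_t^\alpha E=J\). Commuting \(\mathsf D_t^\alpha\) past \(\operatorname{curl}_\alpha\) (Lemma~\ref{lem:spacetime-poly-commutation}) gives
\[
\operatorname{curl}_\alpha(\mathsf D_t^\alpha B)-\mathsf D_t^\alpha\mathsf D_t^\alpha E=\mathsf D_t^\alpha J .
\]
That is the identity needed for \(E\), so I set it aside. For \(B\), I instead apply \(\operatorname{curl}_\alpha\) to the Amp\`ere equation. The curl--curl identity together with \(\operatorname{div}_\alpha B=0\) gives \(\operatorname{curl}_\alpha\operatorname{curl}_\alpha B=-\Delta_\alpha B\). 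Using commutation once more, \(\operatorname{curl}_\alpha\mathsf D_t^\alpha E=\mathsf D_t^\alpha\operatorname{curl}_\alpha E=-\mathsf D_t^\alpha\mathsf D_t^\alpha B\), where the last step is the Faraday law. Substituting, I get \(-\Delta_\alpha B+\mathsf D_t^\alpha\mathsf D_t^\alpha B=\operatorname{curl}_\alpha J\), which is \(\square_\alpha B=\operatorname{curl}_\alpha J\).

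For \(E\), I start from the identity set aside above and substitute \(\mathsf D_t^\alpha B=-\operatorname{curl}_\alpha E\). This yields
\[
-\operatorname{curl}_\alpha\operatorname{curl}_\alpha E-\mathsf D_t^\alpha\mathsf D_t^\alpha E=\mathsf D_t^\alpha J .
\]
Expanding with the curl--curl identity and \(\operatorname{div}_\alpha E=\rho\), the left side becomes \(-\nabla_\alpha\rho+\Delta_\alpha E-\mathsf D_t^\alpha\mathsf D_t^\alpha E\). Rearranging gives \(\square_\alpha E=-\mathsf D_t^\alpha J-\nabla_\alpha\rho\). The source-free statement follows by setting \(\rho=0\) and \(J=0\).

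The only delicate point is bookkeeping. Each use of commutation must involve genuinely polynomial coefficients; this holds because \(\operatorname{curl}_\alpha\), \(\nabla_\alpha\) and \(\operatorname{div}_\alpha\) are componentwise linear combinations of the \(\mathsf D_i^\alpha\). The signs must also be tracked carefully, particularly the sign of \(\mathsf D_t^\alpha E\) in the Amp\`ere equation and the sign convention in \(\square_\alpha=\mathsf D_t^\alpha\mathsf D_t^\alpha-\Delta_\alpha\). I expect no genuine obstacle beyond this. I will stress that \(\mathsf D_t^\alpha\mathsf D_t^\alpha\) is only ever used as an iterated operator, never identified with an order-\(2\alpha\) derivative, consistent with the convention stated before the theorem.
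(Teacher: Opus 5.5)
Your proposal is correct and follows the paper's proof essentially step for step. You take $\operatorname{curl}_\alpha$ of the Amp\`ere-type equation for $B$ and apply $\mathsf D_t^\alpha$ to it for $E$, commute via Lemma~\ref{lem:spacetime-poly-commutation}, substitute the Faraday law, and close with the curl--curl identity together with $\operatorname{div}_\alpha B=0$ or $\operatorname{div}_\alpha E=\rho$; the only difference is that you derive the time-differentiated identity first and set it aside, and your signs all check out.
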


\begin{proof}
From the inhomogeneous equation,
\begin{equation}
\operatorname{curl}_\alpha B-\mathsf D_t^\alpha E=J.
\end{equation}
Taking \(\operatorname{curl}_\alpha\) and using
Lemma~\ref{lem:spacetime-poly-commutation} gives
\begin{equation}
\operatorname{curl}_\alpha\operatorname{curl}_\alpha B
-
\mathsf D_t^\alpha(\operatorname{curl}_\alpha E)
=
\operatorname{curl}_\alpha J.
\end{equation}
Using the homogeneous equation
\begin{equation}
\operatorname{curl}_\alpha E=-\mathsf D_t^\alpha B
\end{equation}
and again using Lemma~\ref{lem:spacetime-poly-commutation}, we obtain
\begin{equation}
\operatorname{curl}_\alpha\operatorname{curl}_\alpha B
+
\mathsf D_t^\alpha\mathsf D_t^\alpha B
=
\operatorname{curl}_\alpha J.
\end{equation}
By Lemma~\ref{lem:fractional-vector-identities} and
\(\operatorname{div}_\alpha B=0\),
\begin{equation}
\operatorname{curl}_\alpha\operatorname{curl}_\alpha B
=
-\Delta_\alpha B.
\end{equation}
Therefore
\begin{equation}
\mathsf D_t^\alpha\mathsf D_t^\alpha B-
\Delta_\alpha B
=
\operatorname{curl}_\alpha J,
\end{equation}
which is the asserted equation for \(B\).

For \(E\), apply \(\mathsf D_t^\alpha\) to
\begin{equation}
\operatorname{curl}_\alpha B-\mathsf D_t^\alpha E=J.
\end{equation}
This gives
\begin{equation}
\mathsf D_t^\alpha(\operatorname{curl}_\alpha B)
-
\mathsf D_t^\alpha\mathsf D_t^\alpha E
=
\mathsf D_t^\alpha J.
\end{equation}
By Lemma~\ref{lem:spacetime-poly-commutation} and the homogeneous equation
\begin{equation}
\mathsf D_t^\alpha B=-\operatorname{curl}_\alpha E,
\end{equation}
we get
\begin{equation}
-\operatorname{curl}_\alpha\operatorname{curl}_\alpha E
-
\mathsf D_t^\alpha\mathsf D_t^\alpha E
=
\mathsf D_t^\alpha J.
\end{equation}
Using
\begin{equation}
\operatorname{curl}_\alpha\operatorname{curl}_\alpha E
=
\nabla_\alpha(\operatorname{div}_\alpha E)-\Delta_\alpha E
=
\nabla_\alpha\rho-\Delta_\alpha E,
\end{equation}
we obtain
\begin{equation}
\mathsf D_t^\alpha\mathsf D_t^\alpha E-
\Delta_\alpha E
=
-\mathsf D_t^\alpha J-\nabla_\alpha\rho.
\end{equation}
This proves the claim.
\end{proof}

\begin{remark}
The vector identities, charge conservation law, and wave equations in this
section are algebraic consequences of the polynomial coefficient calculus. Under
the diagonal chain isomorphism \(\Phi\) of Proposition~\ref{prop:dRham-reduction},
applied with \(d=4\) to the spacetime variables
\((\Theta^\alpha,X_1^\alpha,X_2^\alpha,X_3^\alpha)\),
these identities correspond to the classical polynomial Maxwell identities in
the associated \(y\)-coordinates. Thus the fractional features of this section
reside in the chosen anchored coframe and in the polynomial Caputo operators
attached to it, not in new algebraic identities beyond their classical
\(\Phi\)-conjugates. In particular, the fractional wave operator is built from
commuting polynomial Caputo operators on \(\mathscr P_\alpha(U_{\mathrm{st}})\),
and repeated order-\(\alpha\) time differentiation is treated as an iterated
operator rather than collapsed into a single derivative of order \(2\alpha\).
\end{remark}

\section{Concluding remarks}

We have developed a coordinate-anchored pointwise theory of Caputo-type
fractional tangent functionals. The central local result is the representation
\begin{equation}
\Dcal_i^\alpha(p)=\R\cdot \bigl((\partial_{C,i}^\alpha)|_p\bigr),
\qquad
T_p^\alpha U=\bigoplus_{i=1}^d\Dcal_i^\alpha(p),
\end{equation}
obtained by working on the range of the coordinatewise
Riemann--Liouville fractional integral and defining the Caputo operator by exact
inversion. This also separates the normalized slice from the ambient tangent
vector space, and shows that at interior base points the resulting tangent space
acts faithfully on the common anchored space \(\Acal^\alpha(U)\).

The formal exterior calculus constructed from the symbols \(d^\alpha X_i\) is a
global algebraic realization of this local picture. On the polynomial coefficient
algebra generated by the fractional coordinate primitives, \(d^\alpha\) is stable
and nilpotent, admits an explicit fractional polynomial Poincar\'e homotopy, and
supports a Lorentzian coframe-attached Maxwell-type system with charge
conservation and fractional wave equations. The coordinate-rigidity theorem
shows that, among real-valued positive-cone-preserving linear coordinate changes,
the associated fractional coframe is preserved only by permutations and positive
rescalings of the coordinates.

Furthermore, the polynomial
complex is chain-isomorphic to the ordinary polynomial de~Rham complex by the
diagonal rescaling of Proposition~\ref{prop:dRham-reduction}, so it does not
produce new polynomial cohomology. The construction's value is therefore structural: it shows that
the anchored local tangent--cotangent theory admits a stable nilpotent exterior
complex and a mathematically consistent differential-form PDE model. 

Natural follow-up problems are therefore to identify invariant formulations of
the anchored construction, to enlarge the coefficient algebra while preserving
nilpotency, and to develop analytic or numerical theories for the Maxwell-type
system. In particular, the nonlocality of the Caputo operator obstructs a direct
elementwise finite element exterior calculus as developed in \cite{arnold2010finite,ArnoldFalkWinther2006}, but the polynomial subcomplexes
\(\Omega_{\alpha,\le N}^\bullet\) provide a natural global Galerkin starting
point. For models with fractional time and classical space, hybrid schemes
combining finite element exterior calculus in space with Caputo-in-time
discretizations may be the most practical route.

\bibliographystyle{abbrv}
\small \setlength{\bibsep}{0.4pt}
\bibliography{lit}

\end{document}